\newcommand\version{November 11, 2024}
\newtheorem{theorem}{Theorem}
\newtheorem{proposition}[theorem]{Proposition}
\newtheorem{lemma}[theorem]{Lemma}
\newtheorem{corollary}[theorem]{Corollary}
\theoremstyle{definition}
\newtheorem{assumption}[theorem]{Assumption}
\theoremstyle{remark}
\newtheorem{remark}[theorem]{Remark}
\newcommand{\1}{\mathds{1}}
\renewcommand{\epsilon}{\varepsilon}
\newcommand{\N}{\mathbb{N}}
\renewcommand{\phi}{\varphi}
\newcommand{\R}{\mathbb{R}}
\newcommand{\Haus}{\mathcal{H}}
\DeclareMathOperator{\Tr}{Tr}
\newcommand{\limplus}{{\mathchoice{\vcenter{\hbox{$\scriptstyle +$}}}
  {\vcenter{\hbox{$\scriptstyle +$}}}
  {\vcenter{\hbox{$\scriptscriptstyle +$}}}
  {\vcenter{\hbox{$\scriptscriptstyle +$}}}
}}
\newcommand{\limminus}{{\mathchoice{\vcenter{\hbox{$\scriptstyle -$}}}
  {\vcenter{\hbox{$\scriptstyle -$}}}
  {\vcenter{\hbox{$\scriptscriptstyle -$}}}
  {\vcenter{\hbox{$\scriptscriptstyle -$}}}
}}
\newcommand{\limpm}{{\mathchoice{\vcenter{\hbox{$\scriptstyle \pm$}}}
  {\vcenter{\hbox{$\scriptstyle \pm$}}}
  {\vcenter{\hbox{$\scriptscriptstyle \pm$}}}
  {\vcenter{\hbox{$\scriptscriptstyle \pm$}}}
}}
\begin{document}

\title[\version]{Spectral asymptotics for Robin Laplacians\\ on Lipschitz sets}

\author{Rupert L. Frank}
\address[Rupert L. Frank]{Mathe\-matisches Institut, Ludwig-Maximilians Universit\"at M\"unchen, The\-resienstr.~39, 80333 M\"unchen, Germany, and Munich Center for Quantum Science and Technology, Schel\-ling\-str.~4, 80799 M\"unchen, Germany, and Mathematics 253-37, Caltech, Pasa\-de\-na, CA 91125, USA}
\email{r.frank@lmu.de}

\author{Simon Larson}
\address{\textnormal{(Simon Larson)} Mathematical Sciences, Chalmers University of Technology and the University of Gothenburg, SE-41296 Gothenburg, Sweden}
\email{larsons@chalmers.se}

\newcommand\blfootnote[1]{%
  \begingroup
  \renewcommand\thefootnote{}\footnote{#1}%
  \addtocounter{footnote}{-1}%
  \endgroup
}

\blfootnote{\copyright\, 2024 by the authors. This paper may be reproduced, in its entirety, for non-commercial purposes.\\
	Partial support through US National Science Foundation grant DMS-1954995 (R.L.F.), the German Research Foundation grants EXC-2111-390814868 and TRR 352-Project-ID 470903074 (R.L.F.), the Knut and Alice Wallenberg foundation grant KAW 2017.0295 (S.L.), as well as the Swedish Research Council grant no.~2023-03985 (S.L.) is acknowledged.}

\dedicatory{Dedicated to the memory of Dimitri Yafaev}

\begin{abstract}
	We prove two-term spectral asymptotics for the Riesz means of the eigenvalues of the Laplacian on a Lipschitz domain with Robin boundary conditions. The second term is the same as in the case of Neumann boundary conditions. This is valid for Riesz means of arbitrary positive order. For orders at least one and under additional assumptions on the function determining the boundary conditions we derive leading order asymptotics for the difference between Riesz means of Robin and Neumann eigenvalues.
\end{abstract}

\maketitle

\section{Introduction and main results}

The present paper is a companion to our recent works \cite{FrankLarson_24,FrankLarson_24b}, but can be read independently. In \cite{FrankLarson_24} we have studied two-term asymptotics for the Riesz means of eigenvalues of the Dirichlet and Neumann Laplacians. Our focus was to derive such asymptotics for small Riesz exponents, in fact, for any positive exponent, and under weak regularity assumptions on the underlying domain, which we assumed to be a bounded open set with Lipschitz boundary. Our goal in the present paper is to extend these results to the case of Robin boundary conditions.

To be more specific, let $\Omega\subset \R^d$, $d\geq 2$, be an open and bounded set with Lipschitz regular boundary. For $\sigma \colon \partial\Omega \to \R$ we consider the Robin Laplace operator $-\Delta_\Omega^{(\sigma)}$ in $L^2(\Omega)$ defined via the quadratic form
\begin{equation*}
	H^1(\Omega) \ni u \mapsto \int_\Omega |\nabla u(x)|^2\,dx + \int_{\partial\Omega} \sigma(x) |u(x)|^2\,d\Haus^{d-1}(x)\,.
\end{equation*}
Here $\mathcal H^{d-1}$ denotes $(d-1)$-dimensional Hausdorff measure, whose restriction to $\partial\Omega$ coincides with the usual surface measure. The Lebesgue spaces $L^p(\partial\Omega)$ are defined with respect to this measure.

We always assume that $\sigma\in L^p(\partial\Omega)$ for some $p>1$ if $d=2$ and $p\geq d-1$ if $d>2$. It then follows from Sobolev trace inequalities, which are valid in view of the extension property of $\Omega$, that the above quadratic form is lower semibounded and closed. Since the quadratic form $u\mapsto \int_{\partial\Omega} \sigma |u|^2 \,d\mathcal H^{d-1}$ is compact in $H^1(\Omega)$, it follows from Rellich's compactness theorem that the spectrum of the operator $-\Delta_\Omega^{(\sigma)}$ is discrete, that is, it consists of eigenvalues of finite multiplicities accumulating only at infinity. Our interest is in the asymptotic behavior of these eigenvalues.

As is well known, on functions in its domain (and, in particular, on eigenfunctions) the operator $-\Delta_\Omega^{(\sigma)}$ acts as the Laplacian and such functions $u$ satisfy the Robin boundary condition $\partial_\nu u +\sigma u=0$ on $\partial\Omega$ in a generalized sense. Here $\nu$ denotes the outer unit normal and $\partial_\nu$ the corresponding derivative.

The following is our first main result. It provides two-term spectral asymptotics for Riesz means of arbitrary positive order. Both terms in the asymptotics are the same as in the Neumann case and, in particular, are independent of the function $\sigma$ describing the Robin boundary condition. The constants appearing there are given by
$$
L_{\gamma,d}^{\rm sc} = (4\pi)^{-\frac d2}\ \frac{\Gamma(\gamma+1)}{\Gamma(\gamma+\frac d2+1)} \,.
$$
In what follows we use the notation $x_\limpm = \frac{1}{2}(|x|\pm x)$.
\begin{theorem}\label{main}
	Let $d\geq 2$, let $\Omega\subset\R^d$ be a bounded open set with Lipschitz boundary and let $\sigma\in L^p(\partial\Omega)$ for some $p>d-1$. Then for any $\gamma>0$, as $\lambda\to\infty$,
	$$
	\Tr(-\Delta_\Omega^{(\sigma)}-\lambda)_\limminus^\gamma = L_{\gamma,d}^{\rm sc} |\Omega| \lambda^{\gamma+\frac d2} + \frac14 L_{\gamma,d-1}^{\rm sc} \mathcal H^{d-1}(\partial\Omega) \lambda^{\gamma+\frac{d-1}2}+ o(\lambda^{\gamma+\frac{d-1}2}) \,.
	$$
\end{theorem}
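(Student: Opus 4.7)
Plan.  The plan is to reduce the theorem to the Neumann asymptotic proved in \cite{FrankLarson_24}, which already gives precisely the claimed two-term formula for $\sigma\equiv 0$; it therefore suffices to establish
\begin{equation*}
\Tr(-\Delta_\Omega^{(\sigma)}-\lambda)_\limminus^\gamma - \Tr(-\Delta_\Omega^{(0)}-\lambda)_\limminus^\gamma = o(\lambda^{\gamma+\frac{d-1}{2}}) \quad \text{as } \lambda\to\infty.
\end{equation*}
A first reduction uses the form sandwich $q_{-|\sigma|}[u]\leq q_\sigma[u]\leq q_{|\sigma|}[u]$, combined with the anti-monotonicity of Riesz means in operator order, to replace $\sigma$ by the sign-definite $\pm|\sigma|$; so one may as well assume $\sigma$ is sign-definite.

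The main argument is a truncation-plus-localization scheme. Write $\sigma = \sigma_M + \sigma^M$ with $\sigma_M := \sigma\1_{\{|\sigma|\leq M\}}\in L^\infty(\partial\Omega)$ and $\|\sigma^M\|_{L^p(\partial\Omega)}\to 0$ as $M\to\infty$. For the bounded part $\sigma_M$, adapt the cube-based localization machinery of \cite{FrankLarson_24} to the Robin form: cover $\Omega$ at a scale $h=h(\lambda)$, use Dirichlet--Neumann bracketing on interior cubes to produce the Weyl volume term, and on each boundary cube $Q_j$ compare the local Robin problem to the local Neumann problem. Via an approximate half-space model (after Lipschitz straightening) the Robin correction on $Q_j$ is of order $\|\sigma_M\|_\infty\, h^{d-1}\lambda^{\gamma+\frac{d-2}{2}}$; summed over the $\sim\Haus^{d-1}(\partial\Omega)/h^{d-1}$ boundary cubes, the total shift is $O(M\Haus^{d-1}(\partial\Omega)\lambda^{\gamma+\frac{d-2}{2}})$, which is $o(\lambda^{\gamma+\frac{d-1}{2}})$ provided $M=M(\lambda)=o(\lambda^{1/2})$.

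For the $L^p$-tail $\sigma^M$, the hypothesis $p>d-1$ produces $s\in(\tfrac12,1)$ with a continuous Sobolev trace $H^s(\Omega)\hookrightarrow L^{2p/(p-1)}(\partial\Omega)$. Combining H\"older with interpolation between $H^1(\Omega)$ and $L^2(\Omega)$ yields, for every $\mu>0$,
\[
\Bigl|\int_{\partial\Omega}\sigma^M|u|^2\,d\Haus^{d-1}\Bigr| \leq C\|\sigma^M\|_{L^p}\bigl(\mu\|\nabla u\|_{L^2}^2 + \mu^{-\frac{s}{1-s}}\|u\|_{L^2}^2\bigr),
\]
and operator monotonicity turns this into a Riesz-mean shift of magnitude $O(\|\sigma^M\|_{L^p}^{\kappa}\lambda^{\gamma+\frac{d}{2}})$ for some $\kappa>0$ (after optimizing $\mu$). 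Since $\|\sigma^M\|_{L^p}\to 0$ as $M\to\infty$, one can couple $M=M(\lambda)\to\infty$ slowly enough that both this tail error and the bounded-part error $M(\lambda)\lambda^{\gamma+\frac{d-2}{2}}$ are $o(\lambda^{\gamma+\frac{d-1}{2}})$, closing the argument.

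The main obstacle is the boundary-cube estimate for $\sigma_M$. Because $\partial\Omega$ is only Lipschitz and $\sigma_M$ need not be continuous, one cannot literally reduce to an exactly solvable half-space Robin model and read off the $\sigma$-correction; the required $O(\|\sigma_M\|_\infty h^{d-1}\lambda^{\gamma+\frac{d-2}{2}})$ bound must instead be produced by a variational comparison with constants depending quantitatively on $M$ and on the Lipschitz character of $\Omega$, and then tracked through the localization so that $M(\lambda)$ and $h(\lambda)$ can be consistently balanced. A secondary technical point is that the interpolation exponent $s$ approaches $1$ as $p\downarrow d-1$, so the tail estimate degrades near the endpoint and requires the strict inequality $p>d-1$.
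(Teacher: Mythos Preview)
The balancing step at the end does not close, and this is a genuine gap rather than a technicality. Your tail estimate produces a Riesz-mean shift of order $\|\sigma^M\|_{L^p}^{\kappa}\lambda^{\gamma+d/2}$, which sits a full factor $\lambda^{1/2}$ above the target $o(\lambda^{\gamma+(d-1)/2})$; you therefore need $\|\sigma^M\|_{L^p}^{\kappa}=o(\lambda^{-1/2})$. But for a general $\sigma\in L^p$ there is no rate on this quantity: with $\mathcal H^{d-1}(|\sigma|>t)\sim t^{-p}(\log t)^{-2}$ one has $\|\sigma^M\|_{L^p}\sim(\log M)^{-1/p}$, so the tail constraint forces $M(\lambda)$ to grow like $\exp(\lambda^{c})$, incompatible with the bounded-part requirement $M(\lambda)=o(\lambda^{1/2})$. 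Trading the tail down to an $L^q$-norm with $q<p$ does give the rate $\|\sigma^M\|_{L^q}\lesssim M^{-(p-q)/q}$, but the trace exponent worsens to $s_q\approx \tfrac12+\tfrac{d-1}{2q}$, and the resulting arithmetic balances only when $p>2(d-1)$, not for all $p>d-1$. Separately, the boundary-cube estimate $O(M\lambda^{\gamma+(d-2)/2})$ that you flag as ``the main obstacle'' is never actually established, and for Lipschitz $\partial\Omega$ and small $\gamma$ it is a sharper statement than the theorem itself.

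The paper avoids both difficulties by a different mechanism: for $0<\gamma\le 1$ it writes $-\Delta_\Omega^{(\sigma)}=-(1-\epsilon)\Delta_\Omega^{(0)}-\epsilon\Delta_\Omega^{(\sigma/\epsilon)}$ and applies Rotfel'd's inequality to obtain $\Tr(-\Delta_\Omega^{(\sigma)}-\lambda)_\limminus^\gamma\le\Tr(-(1-\epsilon)\Delta_\Omega^{(0)}-\lambda)_\limminus^\gamma+\Tr(-\epsilon\Delta_\Omega^{(\sigma/\epsilon)})_\limminus^\gamma$ (with an analogous lower bound). The second term is controlled by Rozenblum's Lieb--Thirring inequality for Schr\"odinger operators with surface potentials, which yields the \emph{precise power} $\epsilon^{-(\gamma+d-1)}\int_{\partial\Omega}|\sigma|^{2\gamma+d-1}\,d\mathcal H^{d-1}$. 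There is no truncation of $\sigma$; a single optimization gives $\epsilon=\lambda^{-(\gamma+d/2)/(\gamma+d)}=o(\lambda^{-1/2})$ and closes the argument, after which Aizenman--Lieb integration handles $\gamma>1$. The paper in fact remarks that replacing Rotfel'd's inequality by the simpler form-perturbation bound (which is essentially your tail mechanism) fails to produce more than an order-sharp remainder.
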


The assumption $p>d-1$ in the theorem is optimal in the $L^p$-scale when $d=2$. When $d>2$, it misses the borderline case $p=d-1$ and we do not know whether the asymptotics remain valid in this case.

The novelty of this result is threefold. First, we can reach arbitrarily small, positive orders $\gamma$; second, the Lipschitz assumption on the boundary is a rather minimal requirement in order to have the second term in the asymptotics well defined; third, the integrability assumptions on $\sigma$ are rather weak. Under stronger assumptions, results of this type are already known. For instance, in \cite{FrankGeisinger12} the corresponding result is proved for $\gamma\geq 1$, $C^1$-boundary and continuous $\sigma$. Moreover, if $\Omega$ and $\sigma$ are smooth and if, in addition, a certain dynamical condition is satisfied, then the asymptotics hold even for $\gamma=0$, as shown by Ivrii \cite{Ivrii80}.

A consequence of Theorem~\ref{main} is a corresponding two-term asymptotic expansion for the trace of the associated heat kernel,
\begin{equation*}
	\Tr(e^{t\Delta_\Omega^{(\sigma)}}) = (4\pi t)^{-\frac{d}2}\Bigl(|\Omega| + \frac{\sqrt{\pi t}}2 \Haus^{d-1}(\partial\Omega)+ o(\sqrt{t})\Bigr) \quad \mbox{as }t \to 0^\limplus\,.
\end{equation*}
Again, under stronger assumptions on the smoothness of $\partial\Omega$ and on $\sigma$ it is known that the heat trace has an asymptotic expansion even to higher orders in $\sqrt{t}$; see, e.g., \cite{BransonGilkey_90}. However, when $\partial\Omega$ is merely Lipschitz one cannot generally expect to have any further terms in the asymptotic expansion of $\Tr(e^{t\Delta^{(\sigma)}})$. Indeed, according to Brown~\cite{Brown93} the error term $o(\sqrt{t})$ in the asymptotic expansion for the Neumann problem is sharp on the algebraic scale; see also \cite{FrankLarson_JMP20} for a corresponding result for the Dirichlet Laplacian. We shall see in Section \ref{sec: Proofs asymptotics of differences} below that the same conclusion holds for Robin Laplace operators under weak assumptions on $\sigma$.

We thus find it rather remarkable that under fairly weak assumptions on $\sigma$ the difference of the traces of heat kernels as well as differences of Riesz means of two different Robin Laplace operators obey power-like asymptotic laws. This is our second main result.
\begin{theorem}\label{thm: Difference of Riesz means intro}
	Let $\Omega \subset \R^d$ be open, bounded, with Lipschitz regular boundary. Assume that $\sigma \in L^p(\partial\Omega)$ for some $p>2(d-1)$ and that $\sigma_\limminus\in L^\infty(\partial\Omega)$. Then for any $\gamma \geq 1$, as $\lambda \to \infty$,
	\begin{equation*}
		\Tr(-\Delta_\Omega^{(0)}-\lambda)_\limminus^\gamma-\Tr(-\Delta_\Omega^{(\sigma)}-\lambda)_\limminus^\gamma  = \frac{L_{\gamma,d-2}^{\rm sc}}{2\pi} \int_{\partial\Omega}\sigma(x)\,d\Haus^{d-1}(x) \lambda^{\gamma+ \frac{d-2}{2}}+ o(\lambda^{\gamma+ \frac{d-2}{2}})\,.
	\end{equation*} 
\end{theorem}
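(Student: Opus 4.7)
The plan is to reduce the difference of Riesz means to a weighted boundary spectral sum via a linear interpolation in the Robin parameter, and then to establish a Weyl-type asymptotic for this sum.

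Introduce the family $A(s):=-\Delta_\Omega^{(s\sigma)}$, $s\in[0,1]$, which is uniformly lower semibounded thanks to $\sigma_\limminus\in L^\infty(\partial\Omega)$. With eigenvalues $\lambda_k(s)$ and an associated orthonormal system $(u_k^{(s)})_k$ chosen smoothly in $s$, the Feynman--Hellmann formula gives $\lambda_k'(s)=\int_{\partial\Omega}\sigma|u_k^{(s)}|^2\,d\Haus^{d-1}$. Since $\gamma\ge 1$, the function $t\mapsto(t-\lambda)_\limminus^\gamma$ is $C^1$, and differentiating $\Tr(A(s)-\lambda)_\limminus^\gamma=\sum_k(\lambda-\lambda_k(s))_\limplus^\gamma$ in $s$ and integrating over $[0,1]$ yields the key identity
\begin{equation*}
\Tr(A(0)-\lambda)_\limminus^\gamma-\Tr(A(1)-\lambda)_\limminus^\gamma=\gamma\int_0^1\sum_k(\lambda-\lambda_k(s))_\limplus^{\gamma-1}\int_{\partial\Omega}\sigma|u_k^{(s)}|^2\,d\Haus^{d-1}\,ds\,.
\end{equation*}

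The heart of the proof is then to establish the uniform-in-$s$ boundary Weyl asymptotic
\begin{equation*}
\sum_k (\lambda-\lambda_k(s))_\limplus^{\gamma-1}\int_{\partial\Omega}\sigma|u_k^{(s)}|^2\,d\Haus^{d-1}=2L_{\gamma-1,d}^{\rm sc}\lambda^{\gamma-1+\frac{d}{2}}\int_{\partial\Omega}\sigma\,d\Haus^{d-1}+o(\lambda^{\gamma-1+\frac{d}{2}})\,.
\end{equation*}
A Tauberian argument via Karamata's theorem (applicable since $\gamma-1\ge 0$) reduces this to the heat-trace statement
\begin{equation*}
\int_{\partial\Omega}\sigma(x)\,p_t^{(s\sigma)}(x,x)\,d\Haus^{d-1}(x)=2(4\pi t)^{-d/2}\int_{\partial\Omega}\sigma\,d\Haus^{d-1}+o(t^{-d/2})\qquad(t\to 0^\limplus)\,,
\end{equation*}
where $p_t^{(s\sigma)}$ is the heat kernel of $A(s)$. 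For the half-space Neumann Laplacian the method of images gives $p_t(x,x)=2(4\pi t)^{-d/2}$ exactly at the boundary; since $\partial\Omega$ has a tangent hyperplane $\Haus^{d-1}$-a.e.\ by Rademacher's theorem, a localization argument identifies this as the pointwise leading term of $p_t^{(0)}(x,x)$ at $\Haus^{d-1}$-a.e.\ boundary point, and the Robin contribution $s\sigma|u|^2$ is a form perturbation of relative bound $0$ (uniformly in $s$), hence does not affect this leading order. Integration in $x$ against $\sigma\,d\Haus^{d-1}$ is then justified by dominated convergence using uniform Gaussian bounds on $p_t^{(s\sigma)}$.

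To pass from $\sigma\in L^\infty$ to general $\sigma\in L^p$ with $p>2(d-1)$, I would truncate $\sigma=\sigma_n+r_n$ with $\sigma_n\in L^\infty$ and estimate the remainder by H\"older's inequality combined with a sharp \emph{a priori} bound
\begin{equation*}
\sum_k(\lambda-\lambda_k(s))_\limplus^{\gamma-1}\|u_k^{(s)}\|_{L^{2p'}(\partial\Omega)}^2\lesssim \lambda^{\gamma-1+d/2}\,,
\end{equation*}
uniformly in $s$; this bound holds precisely when $2p'<2(d-1)/(d-2)$, i.e., $p>2(d-1)$, so that the trace embedding $H^1(\Omega)\hookrightarrow L^{2p'}(\partial\Omega)$ is subcritical. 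The main obstacle I anticipate is establishing the sharp pointwise boundary heat-kernel asymptotic on a merely Lipschitz $\partial\Omega$ without recourse to the microlocal techniques available in the smooth case. Once the boundary Weyl asymptotic is in hand, the $s$-integral in the interpolation identity contributes only a factor $1$ at leading order, and the elementary identity $2\gamma L_{\gamma-1,d}^{\rm sc}=L_{\gamma,d-2}^{\rm sc}/(2\pi)$ (from the $\Gamma$-function recursion) matches the claimed coefficient.
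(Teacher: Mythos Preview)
Your Feynman--Hellmann interpolation in the Robin parameter is a genuinely different route from the paper's. The paper never differentiates in $s$; instead it proves the heat-trace difference asymptotic directly via an iterated Duhamel expansion of $k_\Omega^{(\sigma)}$ about $k_\Omega^{(0)}$ (Proposition~8 with $j=2$; this is exactly where $p>2(d-1)$ enters, since it makes the $j=2$ remainder $o(t^{-(d-2)/2})$), then applies the Tauberian theorem to the \emph{monotone} functions $f_1(\lambda)=\Tr(-\Delta_\Omega^{(\sigma)}-\lambda)_\limminus-\Tr(-\Delta_\Omega^{(\sigma_\limplus)}-\lambda)_\limminus$ and $f_2(\lambda)=\Tr(-\Delta_\Omega^{(0)}-\lambda)_\limminus-\Tr(-\Delta_\Omega^{(\sigma_\limplus)}-\lambda)_\limminus$, and subtracts. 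Your scheme instead reduces everything to the boundary spectral-function asymptotic $\rho_\Omega^{(s\sigma)}(\lambda,y)\sim 2L_{0,d}^{\rm sc}\lambda^{d/2}$, which the paper in fact proves separately as Theorem~12 under the weaker hypothesis $p>d-1$. So your strategy, carried out correctly, would appear to yield the theorem already for $p>d-1$.

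That said, three points in your sketch are not proofs. First, Karamata requires a nonnegative measure, but your weights $\int_{\partial\Omega}\sigma|u_k^{(s)}|^2\,d\Haus^{d-1}$ are sign-indefinite; you must split $\sigma=\sigma_\limplus-\sigma_\limminus$ and run the Tauberian step on each piece (precisely the reason the paper introduces $\sigma_\limplus$). Second, the claim that a ``form perturbation of relative bound $0$'' leaves the leading order of $p_t^{(s\sigma)}(y,y)$ unchanged on $\partial\Omega$ is only a heuristic: relative form bounds control resolvents and semigroup norms, not diagonal heat-kernel values at boundary points. What you actually need is $|k_\Omega^{(s\sigma)}(t,y,y)-k_\Omega^{(0)}(t,y,y)|=o(t^{-d/2})$ uniformly in $y\in\partial\Omega$ and $s\in[0,1]$, which the paper obtains from a one-step Duhamel estimate (Proposition~8, $j=1$) together with the Gaussian bound; you will not avoid this ingredient. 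Third, your derivation of $p>2(d-1)$ is wrong on two counts: the inequality $2p'<2(d-1)/(d-2)$ is equivalent to $p>d-1$, not $p>2(d-1)$, and the trace embedding $H^1\hookrightarrow L^{2p'}(\partial\Omega)$ only gives $\|u_k^{(s)}\|_{L^{2p'}(\partial\Omega)}^2\lesssim 1+\lambda_k(s)$, hence a bound of order $\lambda^{\gamma+d/2}$, one power too large. The correct a priori input is the uniform spectral-function bound $\sup_{y\in\overline\Omega}\sum_k(\lambda-\lambda_k(s))_\limplus^{\gamma-1}|u_k^{(s)}(y)|^2\lesssim\lambda^{\gamma-1+d/2}$ coming from the Gaussian estimate, which lets you control the truncation by $\|r_n\|_{L^1(\partial\Omega)}$ alone.
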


The assumptions on $\sigma$ can actually be relaxed, at least to some degree. In Section~\ref{sec: Proofs asymptotics of differences} we shall prove a version of Theorem \ref{thm: Difference of Riesz means intro} under the assumption that the integral kernel of $e^{t\Delta_\Omega^{(\sigma)}}$ satisfies a Gaussian upper bound.

The assumption $\gamma\geq 1$ in Theorem \ref{thm: Difference of Riesz means intro} (in contrast to the weaker assumption $\gamma>0$ in Theorem \ref{main}) is needed for our use of a Tauberian theorem. It guarantees that $\lambda\mapsto \Tr(-\Delta_\Omega^{(\sigma')}-\lambda)_\limminus^\gamma-\Tr(-\Delta_\Omega^{(\sigma)}-\lambda)_\limminus^\gamma$ is monotone when $\sigma'\geq \sigma$. A variant of this technique appeared in our recent paper \cite{FrankLarson_24}.

As shall be proved in Section~\ref{sec: Proofs asymptotics of differences}, Theorem~\ref{thm: Difference of Riesz means intro} with $\gamma =1$ is equivalent to the following result concerning the asymptotic behavior of the average of gaps between the Robin and Neumann eigenvalues. We enumerate by $\lambda_n(-\Delta_\Omega^{(\sigma)})$, $n\geq 1$, the eigenvalues of $-\Delta_\Omega^{(\sigma)}$ in nondecreasing order with multiplicities taken into account.

\begin{corollary}\label{cor: gap asymptotics intro}
    Let $\Omega \subset \R^d$ be open, bounded, with Lipschitz regular boundary. Assume that $\sigma \in L^p(\partial\Omega)$ for some $p>2(d-1)$ and that $\sigma_\limminus \in L^\infty(\partial\Omega)$. Then, as $N \to \infty$, 
	\begin{equation*}
		\sum_{n=1}^N \frac{\lambda_n(-\Delta_\Omega^{(\sigma)})-\lambda_n(-\Delta_\Omega^{(0)})}{N} = \frac{2}{|\Omega|}\int_{\partial\Omega}\sigma(x)\,d\Haus^{d-1}(x) + o(1)\,.
	\end{equation*} 
\end{corollary}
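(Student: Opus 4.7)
The plan is to sandwich $S(N):=\sum_{n=1}^N\bigl(\lambda_n(-\Delta_\Omega^{(\sigma)})-\lambda_n(-\Delta_\Omega^{(0)})\bigr)$ between two values of the Riesz-mean difference
$F(\lambda):=\Tr(-\Delta_\Omega^{(0)}-\lambda)_\limminus^1-\Tr(-\Delta_\Omega^{(\sigma)}-\lambda)_\limminus^1$ and combine Theorem~\ref{thm: Difference of Riesz means intro} with $\gamma=1$ with Weyl's law for the two operators. The only identity I would need is
\[
\Tr(H-\lambda)_\limminus^1=N_H(\lambda)\lambda-\sum_{n=1}^{N_H(\lambda)}\lambda_n(H),
\]
valid for any self-adjoint $H$ with discrete spectrum and any $\lambda>0$, where $N_H(\lambda):=\#\{n:\lambda_n(H)<\lambda\}$.

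Assume that $\lambda_{N+1}^{(0)}$ is a simple eigenvalue of $-\Delta_\Omega^{(0)}$; multiplicities are handled by a standard limiting argument. Setting $\lambda=\lambda_{N+1}^{(0)}$ and writing $M=N^{(\sigma)}(\lambda)$, subtracting the two instances of the identity above and substituting $S(N)$ yields
\[
F(\lambda_{N+1}^{(0)})-S(N)=
\begin{cases}
-\sum_{n=M+1}^{N}\bigl(\lambda_n(-\Delta_\Omega^{(\sigma)})-\lambda_{N+1}^{(0)}\bigr), & M\le N,\\[4pt]
+\sum_{n=N+1}^{M}\bigl(\lambda_n(-\Delta_\Omega^{(\sigma)})-\lambda_{N+1}^{(0)}\bigr), & M>N.
\end{cases}
\]
In both cases each summand has the sign that forces the right-hand side to be $\le 0$ (for $n>M$ the definition of $M$ gives $\lambda_n(-\Delta_\Omega^{(\sigma)})\ge\lambda_{N+1}^{(0)}$, for $n\le M$ it gives $\lambda_n(-\Delta_\Omega^{(\sigma)})<\lambda_{N+1}^{(0)}$). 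Hence $F(\lambda_{N+1}^{(0)})\le S(N)$. The analogous computation at $\lambda=\lambda_N^{(\sigma)}$ produces the reverse inequality $S(N)\le F(\lambda_N^{(\sigma)})$.

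It remains to match the two sides asymptotically. Theorem~\ref{thm: Difference of Riesz means intro} with $\gamma=1$ gives
\[
F(\lambda)=2L_{0,d}^{\rm sc}\int_{\partial\Omega}\sigma\,d\Haus^{d-1}\cdot\lambda^{d/2}+o(\lambda^{d/2}),
\]
where the identity $L_{1,d-2}^{\rm sc}/(2\pi)=2L_{0,d}^{\rm sc}$ is a short Gamma-function manipulation. Weyl's law for both operators -- which on a merely Lipschitz domain can be extracted from Theorem~\ref{main} (applied with any positive $\gamma$) by Karamata's Tauberian theorem applied to the monotone primitives $\int_0^\lambda N^{(\ast)}(\mu)\,d\mu=\Tr(-\Delta_\Omega^{(\ast)}-\lambda)_\limminus^1$ -- yields both $(\lambda_{N+1}^{(0)})^{d/2}=N/(L_{0,d}^{\rm sc}|\Omega|)+o(N)$ and the same expansion for $(\lambda_N^{(\sigma)})^{d/2}$. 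Substituting, both $F(\lambda_{N+1}^{(0)})$ and $F(\lambda_N^{(\sigma)})$ are equal to $\tfrac{2N}{|\Omega|}\int_{\partial\Omega}\sigma\,d\Haus^{d-1}+o(N)$, and dividing the sandwich by $N$ gives the corollary.

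The argument is structurally straightforward; the nontrivial content already sits in Theorem~\ref{thm: Difference of Riesz means intro}, whose proof will occupy Section~\ref{sec: Proofs asymptotics of differences}. Note that no sign restriction on $\sigma$ is imposed: the algebraic identity above handles sign-indefinite $\sigma$ uniformly, and the hypothesis $\sigma_\limminus\in L^\infty$ enters only through Theorem~\ref{thm: Difference of Riesz means intro} and to ensure that $-\Delta_\Omega^{(\sigma)}$ is semibounded so that the Weyl Tauberian step is available.
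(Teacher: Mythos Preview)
Your argument is correct and is essentially the same as the paper's. The paper factors the sandwich inequality $F(\lambda_N^{(0)})\le S(N)\le F(\lambda_N^{(\sigma)})$ into an abstract lemma about nondecreasing sequences (Lemma~\ref{lem: gap and Riesz diff equiv}), which it proves via the inequalities
\[
\sum_{n=1}^N(b_n-a_n)\ \le\ -\sum_{n\ge 1}(b_n-b_N)_\limminus+\sum_{n\ge 1}(a_n-b_N)_\limminus,
\qquad
\sum_{n=1}^N(b_n-a_n)\ \ge\ -\sum_{n\ge 1}(b_n-a_N)_\limminus+\sum_{n\ge 1}(a_n-a_N)_\limminus,
\]
and then applies that lemma with $a_n=\lambda_n^{(0)}$, $b_n=\lambda_n^{(\sigma)}$; this is your sandwich verbatim. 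Two small cosmetic remarks: (i) if instead of $\lambda_{N+1}^{(0)}$ you evaluate at $\lambda_N^{(0)}$ and use the variational inequality $\Tr(H-\lambda)_\limminus\ge K\lambda-\sum_{n=1}^K\lambda_n(H)$ for \emph{any} $K$, the multiplicity caveat disappears; (ii) the passage from Theorem~\ref{main} with $\gamma=1$ to one-term Weyl asymptotics for $N^{(\ast)}(\lambda)$ is a monotone differentiation (de-Rieszification) rather than Karamata proper, but the conclusion is of course correct.
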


The asymptotics in Corollary~\ref{cor: gap asymptotics intro} were recently obtained in \cite{Rudnick_etal_21} under the assumption that $\Omega$ has piecewise smooth boundary and that $\sigma$ is positive and continuous. Their proof relies on a rather deep pointwise Weyl law from \cite{HassellZelditch_04}. Independently of the proof of Corollary \ref{cor: gap asymptotics intro} we obtain a variant of a pointwise Weyl law under weaker assumptions on $\Omega$ and $\sigma$ by more elementary means; see Theorem \ref{density}. We mention in passing that, besides the mean gap asymptotics, the paper \cite{Rudnick_etal_21} contains also very interesting results concerning the individual gap asymptotics under additional assumptions.

\medskip

We are grateful to the anonymous referee for their helpful remarks.

\medskip

We would like to dedicate this article to the memory of Dimitri Yafaev. His deep contributions to spectral theory and his clear style of mathematical writing had a profound influence on us. He is sorely missed.


\section{Two-term asymptotics}

Our goal in this section is to prove Theorem~\ref{main}. Our proof relies on the following bound in the spirit of Cwikel--Lieb--Rozenblum and Lieb--Thirring bounds. We will deduce it from a recent bound of Rozenblum \cite{Rozenblum22}. The difference between Rozenblum's bound and ours is that he looks at the Schr\"odinger operator with singular potential $\sigma \mathcal H^{d-1}|_{\partial\Omega}$ on all of $\R^d$, while we look at the corresponding operator on $\Omega$. Our restriction to $\Omega$ gives rise to an additional term in the bound, which, however, will be irrelevant for our purposes. An initial bound of this form was obtained in \cite{FrankLaptev08}.

\begin{proposition}\label{ltrozen}
	Let $\gamma>0$ if $d=2$ and $\gamma\geq 0$ if $d>2$. Let $\Omega\subset\R^d$ be a bounded open set with Lipschitz boundary and let $\sigma\in L^{2\gamma+d-1}(\partial\Omega)$. Then
	$$
	\Tr(-\Delta_\Omega^{(\sigma)})_\limminus^\gamma \lesssim \int_{\partial\Omega} \sigma(x)_\limminus^{2\gamma+d-1}\,d\mathcal H^{d-1}(x) + \left( \int_{\partial\Omega} \sigma(x)_\limminus\,d\mathcal H^{d-1}(x) \right)^\gamma
	$$
	with an implicit constant depending on $d$, $\gamma$ and $\Omega$.
\end{proposition}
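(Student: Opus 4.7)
The plan is to deduce the bound from Rozenblum's estimate
\[
\Tr(-\Delta_{\R^d}-V\mathcal H^{d-1}|_{\partial\Omega})_\limminus^\gamma \lesssim \int_{\partial\Omega}V^{2\gamma+d-1}\,d\mathcal H^{d-1}
\]
(applied with $V=\sigma_\limminus$) by transferring the negative spectrum of the Robin problem on $\Omega$ to that of the Schr\"odinger operator on $\R^d$ via a Lipschitz extension, while handling separately the one-dimensional zero mode of $-\Delta_\Omega^N$, which is precisely what will produce the second term in the statement.

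Since $\sigma\geq -\sigma_\limminus$ pointwise, the variational principle gives $\Tr(-\Delta_\Omega^{(\sigma)})_\limminus^\gamma\leq \Tr(-\Delta_\Omega^{(-\sigma_\limminus)})_\limminus^\gamma$, and it suffices to treat $\sigma=-V$ with $V=\sigma_\limminus\geq 0$. Write $H_\Omega=-\Delta_\Omega^{(-V)}$ and $\tilde H_\alpha=-\Delta_{\R^d}-\alpha V\mathcal H^{d-1}|_{\partial\Omega}$ for $\alpha>0$, with quadratic forms $q_\Omega$ and $\tilde q_\alpha$ respectively. A reflection-type Lipschitz extension $E\colon H^1(\Omega)\to H^1(\R^d)$ with $(Eu)|_\Omega=u$ and support in a fixed neighbourhood of $\overline{\Omega}$ satisfies, on $w\in H^1(\Omega)\cap L^2_0(\Omega)$ after invoking Poincar\'e's inequality, both $\|\nabla Ew\|_{L^2(\R^d)}^2\leq C\|\nabla w\|_{L^2(\Omega)}^2$ and $\|Ew\|_{L^2(\R^d)}\leq C'\|w\|_{L^2(\Omega)}$. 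Since $Ew$ has the same trace as $w$ on $\partial\Omega$,
\[
\tilde q_C(Ew)=\|\nabla Ew\|_{L^2(\R^d)}^2-C\int_{\partial\Omega}V|w|^2 \leq C\|\nabla w\|_{L^2(\Omega)}^2-C\int_{\partial\Omega}V|w|^2 = C\,q_\Omega(w),
\]
and combining this with $\|Ew\|_{L^2}\geq\|w\|_{L^2}$ in a min-max argument gives $N(H_\Omega|_{L^2_0(\Omega)},-t)\leq N(\tilde H_C,-ct)$ for some $c>0$ and all $t>0$. Integrating against $t^{\gamma-1}\,dt$ and invoking Rozenblum's bound for $\tilde H_C$ yields $\Tr(H_\Omega|_{L^2_0(\Omega)})_\limminus^\gamma\lesssim \int_{\partial\Omega}V^{2\gamma+d-1}\,d\mathcal H^{d-1}$.

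For the full trace, Cauchy interlacing across the codimension-one subspace $L^2_0(\Omega)$ gives
\[
\Tr(H_\Omega)_\limminus^\gamma\leq(-\lambda_1(H_\Omega))_\limplus^\gamma+\Tr(H_\Omega|_{L^2_0(\Omega)})_\limminus^\gamma.
\]
Writing $u=c+w$ with $c$ constant and $w\in L^2_0(\Omega)$ and using $|c+w|^2\leq 2|c|^2+2|w|^2$ in the boundary integral one obtains, after minimising over $c$, the lower bound $\lambda_1(H_\Omega)\geq \min\bigl(-2|\Omega|^{-1}\int_{\partial\Omega}V\,d\mathcal H^{d-1},\,\lambda_1(-\Delta_\Omega^{(-2V)}|_{L^2_0(\Omega)})\bigr)$. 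The second quantity is controlled by the previous paragraph (applied with $V$ replaced by $2V$), while the first gives the contribution $\bigl(\int_{\partial\Omega}V\,d\mathcal H^{d-1}\bigr)^\gamma$; summing these produces the proposition.

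\textbf{Main obstacle.} The delicate step is arranging an extension operator $E$ for which \emph{both} $\|\nabla Ew\|_{L^2(\R^d)}\lesssim \|\nabla w\|_{L^2(\Omega)}$ and $\|Ew\|_{L^2(\R^d)}\lesssim \|w\|_{L^2(\Omega)}$ hold. Neither is true on general $H^1(\Omega)$ for a standard (Stein- or Jones-type) extension, but both can be recovered on the zero-mean subspace by combining a reflection-type construction with Poincar\'e's inequality. The zero mode of $-\Delta_\Omega^N$ is inaccessible to this method and must be treated by the elementary trial-function estimate above, which is precisely what forces the second term $\bigl(\int_{\partial\Omega}\sigma_\limminus\,d\mathcal H^{d-1}\bigr)^\gamma$ into the bound.
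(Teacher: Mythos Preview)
Your approach is essentially the paper's: reduce to $\sigma\leq 0$, split off the constant mode, use a Lipschitz extension together with Poincar\'e on the zero-mean subspace to compare with the singular Schr\"odinger operator on $\R^d$, and apply Rozenblum's bound. The paper packages the splitting as an operator inequality $-\Delta_\Omega^{(\sigma)}\geq -2(\int_{\partial\Omega}\sigma_\limminus)\,P + P^\bot(-\Delta_\Omega^{(2\sigma)})P^\bot$ rather than via Cauchy interlacing and a separate estimate on $\lambda_1$, but the content is the same.

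One slip to correct: in the min-max step you invoke $\|Ew\|_{L^2}\geq\|w\|_{L^2}$, but that inequality points the wrong way. From $\tilde q_C(Ew)\leq Cq_\Omega(w)<-Ct\|w\|_{L^2}^2$ you need $\|w\|_{L^2}^2\gtrsim\|Ew\|_{L^2}^2$ to conclude $\tilde q_C(Ew)<-ct\|Ew\|_{L^2}^2$; that is, you need the \emph{upper} bound $\|Ew\|_{L^2}\leq C'\|w\|_{L^2}$, which you did state earlier. (Incidentally, this $L^2\to L^2$ bound holds for the standard Stein extension on Lipschitz domains without any mean-zero hypothesis, contrary to what your ``Main obstacle'' paragraph suggests---only the gradient-to-gradient estimate requires Poincar\'e.) With this fix the argument goes through.
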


\begin{proof}
	By the variational principle we may assume that $\sigma\leq 0$. If $P$ denotes the orthogonal projection in $L^2(\Omega)$ onto constants and if $P^\bot = 1- P$, then for $u\in H^1(\Omega)$ we find, writing $u=c+v$ with $c=Pu$ and $v=P^\bot u$,
	$$
	\int_{\partial\Omega} \sigma |u|^2\,d\mathcal H^{d-1} \geq - 2 \int_{\partial\Omega} \sigma_\limminus\,d\mathcal H^{d-1} |c|^2 - 2 \int_{\partial\Omega} \sigma_\limminus |v|^2\,d\mathcal H^{d-1} \,.
	$$
	Thus, we have
	$$
	-\Delta_\Omega^{(\sigma)} \geq - 2 \int_{\partial\Omega} \sigma_\limminus\,d\mathcal H^{d-1} P + P^\bot (-\Delta_\Omega^{(2\sigma)}) P^\bot \,,
	$$
	and consequently
	$$
	\Tr(-\Delta_\Omega^{(\sigma)})_\limminus^\gamma \leq 2^\gamma \left( \int_{\partial\Omega} \sigma_\limminus\,d\mathcal H^{d-1} \right)^\gamma + \Tr( P^\bot (-\Delta_\Omega^{(2\sigma)}) P^\bot )_\limminus^\gamma \,.
	$$
 
	Now as in \cite[Proof of Corollary 4.37]{FrankLaptevWeidl}, if $E:H^1(\Omega)\to H^1(\R^d)$ denotes the extension operator and if $v\in H^1(\Omega)$ has mean zero, then
	$$
	\int_{\R^d} |\nabla Ev|^2 \,dx \leq \|E\|_{H^1\to H^1}^2 \int_\Omega (|\nabla v|^2 + |v|^2)\,dx \leq M \int_\Omega |\nabla v|^2\,dx
	$$
	with $M= \|E\|_{H^1\to H^1}^2 (1 + P_\Omega^{-1})$, where $P_\Omega$ is the Poincar\'e constant. This implies that
	$$
	P^\bot (-\Delta_\Omega^{(2\sigma)}) P^\bot \geq P^\bot E^* (-M^{-1}\Delta + 2\sigma \mathcal H^{d-1}|_{\partial\Omega}) E P^\bot \,,
	$$
    where $-M^{-1}\Delta+2\sigma \Haus^{d-1}|_{\partial\Omega}$ denotes the Schr\"odinger operator on $\R^d$ with potential given by the singular measure $2\sigma\Haus^{d-1}|_{\partial\Omega}$ defined by means of quadratic forms; for details see~\cite{Rozenblum22}. Therefore
	\begin{align*}
		\Tr( P^\bot (-\Delta_\Omega^{(2\sigma)}) P^\bot )_\limminus^\gamma 
		& \leq \Tr (P^\bot E^* (-M^{-1}\Delta + 2\sigma \mathcal H^{d-1}|_{\partial\Omega}) E P^\bot )_\limminus^\gamma \\
		& = M^{-\gamma} \Tr (P^\bot E^* (-\Delta + 2M \sigma \mathcal H^{d-1}|_{\partial\Omega}) E P^\bot )_\limminus^\gamma \\
		& \leq M^{-\gamma} \| E P^\bot \|_{L^2\to L^2}^2 \Tr (-\Delta + 2M \sigma \mathcal H^{d-1}|_{\partial\Omega})_\limminus^\gamma \,.
	\end{align*}
    For the last inequality see, e.g., \cite[Corollary 1.31 and Lemma 1.44]{FrankLaptevWeidl}. Using the fact that $E$ is bounded from $L^2\to L^2$ (which follows from the explicit construction of the extension operator for Lipschitz domains; see, e.g., \cite[Theorem 2.92]{FrankLaptevWeidl}) and Rozenblum's bound for the Riesz means of Schr\"odinger operators with singular potentials on $\R^d$ \cite{Rozenblum22}, we obtain the claimed bound.
\end{proof}

\begin{proof}[Proof of Theorem \ref{main}]
	We shall prove the following \emph{claim}. Let $0<\gamma\leq 1$ and let $\sigma\in L^{2\gamma+d-1}(\partial\Omega)$. Then, as $\lambda\to\infty$,
	$$
	\Tr(-\Delta_\Omega^{(\sigma)}-\lambda)_\limminus^\gamma = L_{\gamma,d}^{\rm sc} |\Omega| \lambda^{\gamma+\frac d2} + \frac14 L_{\gamma,d-1}^{\rm sc} \mathcal H^{d-1}(\partial\Omega) \lambda^{\gamma+\frac{d-1}2}+ o(\lambda^{\gamma+\frac{d-1}2}) \,.
	$$
	
	Before proving this claim, let us show that it implies the assertion of the theorem. Let $\sigma\in L^p(\partial\Omega)$ for some $p>d-1$ and set $\gamma_0:=\frac12(p-d+1)>0$. Since $\partial\Omega$ has finite $\mathcal H^{d-1}$ measure, the $L^p$-spaces are nested and we have $\sigma\in L^{2\gamma+d-1}(\partial\Omega)$ for any $\gamma\leq\gamma_0$. Applying the claim, we find that the asymptotics are valid for any $\gamma\in(0,\min\{\gamma_0,1\}]$. By a simple integration argument (see \cite{AizenmanLieb} or \cite[Subsection 5.1.1]{FrankLaptevWeidl}) this implies that the asymptotics are valid for any $\gamma>0$, as claimed.
	
	To prove the claim we write, for $0<\epsilon\leq 1$,
	$$
	-\Delta_\Omega^{(\sigma)} = -(1-\epsilon)\Delta_\Omega^{(0)} - \epsilon \Delta_\Omega^{(\sigma/\epsilon)}
	$$
	and obtain, using Rotfel'd's inequality \cite[Proposition 1.43]{FrankLaptevWeidl} (here we use $\gamma\leq 1$),
	\begin{align*}
		\Tr(-\Delta_\Omega^{(\sigma)}-\lambda)_\limminus^\gamma & \leq \Tr(-(1-\epsilon)\Delta_\Omega^{(0)}-\lambda)_\limminus^\gamma + \Tr (-\epsilon\Delta_\Omega^{(\sigma/\epsilon)})_\limminus^\gamma \\
		& = \Tr(-\Delta_\Omega^{(0)}-\lambda)_\limminus^\gamma + \overline R_1(\lambda,\epsilon) + \overline R_2(\epsilon)
	\end{align*}
	with
	\begin{align*}
		\overline R_1(\lambda,\epsilon) & := \Tr(-(1-\epsilon)\Delta_\Omega^{(0)}-\lambda)_\limminus^\gamma - \Tr(-\Delta_\Omega^{(0)}-\lambda)_\limminus^\gamma \,, \\
		\overline R_2(\epsilon) & := \Tr (-\epsilon\Delta_\Omega^{(\sigma/\epsilon)})_\limminus^\gamma \,.
	\end{align*}
	Similarly, writing
	$$
	-(1+\epsilon)\Delta_\Omega^{(0)} = -\Delta_\Omega^{(\sigma)} - \epsilon \Delta_\Omega^{(-\sigma/\epsilon)} \,,
	$$
	we find
	\begin{align*}
		\Tr(-\Delta_\Omega^{(\sigma)}-\lambda)_\limminus^\gamma & \geq \Tr(-(1+\epsilon)\Delta_\Omega^{(0)}-\lambda)_\limminus^\gamma - \Tr (-\epsilon\Delta_\Omega^{(\sigma/\epsilon)})_\limminus^\gamma \\
		& = \Tr(-\Delta_\Omega^{(0)}-\lambda)_\limminus^\gamma - \underline R_1(\lambda,\epsilon) - \underline R_2(\epsilon)
	\end{align*}
	with
	\begin{align*}
		\underline R_1(\lambda,\epsilon) & := \Tr(-\Delta_\Omega^{(0)}-\lambda)_\limminus^\gamma -  \Tr(-(1+\epsilon)\Delta_\Omega^{(0)}-\lambda)_\limminus^\gamma \,, \\
		\underline R_2(\epsilon) & := \Tr (-\epsilon\Delta_\Omega^{(-\sigma/\epsilon)})_\limminus^\gamma \,.
	\end{align*}
	
	As a direct consequence of Proposition \ref{ltrozen}, we have
	$$
	\overline R_2(\epsilon) \lesssim \epsilon^{-\gamma-d+1}
	\qquad\text{and}\qquad
	\underline R_2(\epsilon) \lesssim \epsilon^{-\gamma-d+1} \,.
	$$
	The implied constants here depend on $\Omega$ and $\sigma$, as well as $\gamma$ and $d$. Moreover, note that the second term in the inequality in Proposition \ref{ltrozen} diverges slower as $\epsilon\to 0$ than the first one.
	
	From the two-term Weyl asymptotics for Riesz means for the Neumann Laplacian proved in \cite[Theorem 1.1]{FrankLarson_24} it follows that
	$$
	|\overline R_1(\lambda,\epsilon)| \lesssim \epsilon \lambda^{\gamma+\frac d2} + o(\lambda^{\gamma+\frac{d-1}{2}})
	\qquad\text{and}\qquad
	|\underline R_1(\lambda,\epsilon)| \lesssim \epsilon \lambda^{\gamma+\frac d2} + o(\lambda^{\gamma+\frac{d-1}{2}}) \,.
	$$
	The $\epsilon \lambda^{\gamma+ \frac{d}2}$ terms here come from the difference in the main terms, while the $o$-terms come from the remainders. Note that we use a cancelation of the volume and the boundary terms.
	
	To summarize, we have shown that
	$$
	|\Tr(-\Delta_\Omega^{(\sigma)}-\lambda)_\limminus^\gamma - \Tr(-\Delta_\Omega^{(0)}-\lambda)_\limminus^\gamma|
	\lesssim \epsilon \lambda^{\gamma+\frac d2} +  \epsilon^{-\gamma-d+1} + o(\lambda^{\gamma+\frac{d-1}{2}})\,.
	$$
	
	We now choose $\epsilon$ to minimize the error terms, that is, $\epsilon \lambda^{\gamma+\frac d2} =  \epsilon^{-\gamma-d+1}$, which is the same as $\epsilon=\lambda^{-\frac{\gamma+\frac d2}{\gamma+d}}$. The crucial point is that $\epsilon = o(\lambda^{-\frac12})$, which is readily verified, using $\gamma>0$. As a consequence we have
	$$
	\epsilon \lambda^{\gamma+\frac d2} + \epsilon^{-\gamma-d+1} = o(\lambda^{-\frac12}) \lambda^{\gamma+\frac d2} = o(\lambda^{\gamma+\frac{d-1}{2}}) \,.
	$$
	Thus, we obtain
	$$
	\Tr(-\Delta_\Omega^{(\sigma)}-\lambda)_\limminus^\gamma - \Tr(-\Delta_\Omega^{(0)}-\lambda)_\limminus^\gamma
	= o(\lambda^{\gamma+\frac{d-1}{2}})\,.
	$$
	Using once again the two-term asymptotics for Riesz means of the Neumann Laplacian in~\cite[Theorem 1.1]{FrankLarson_24}, we arrive at the claimed asymptotics.
\end{proof}

We find it remarkable that if instead of Rotfel'd's inequality we use the simpler inequality in \cite[Proposition 1.40]{FrankLaptevWeidl}, we are not able to make the proof work. It only gives an order-sharp remainder bound. Also, one can use \cite[Proposition 1.42]{FrankLaptevWeidl} to prove asymptotics for $\gamma>1$, but this does not improve the final result, since the main work consists in obtaining asymptotics under minimal $L^p$ assumptions. Under stronger integrability assumptions on $\sigma$ and regularity assumptions on $\Omega$ one can improve the $o$-remainder term in Theorem \ref{main}, but we have not investigated this.


\section{The Robin heat kernel}

In the next section we will prove Theorem \ref{thm: Difference of Riesz means intro}. To do so, we will use a technique from~\cite{FrankLarson_24} that allows us to deduce the Riesz means asymptotics from heat trace asymptotics. The present section is a preparation of the proof of these heat trace asymptotics.

Throughout this section we assume that $\sigma\in L^p(\partial\Omega)$ for some $p>1$ if $d=2$ and $p\geq d-1$ if $d>2$. Additional assumptions will be imposed later on.


\subsection{Gaussian upper bound}

Let $k_\Omega^{(\sigma)}\colon (0, \infty)\times \Omega \times \Omega \to \R$ be the integral kernel of the semigroup $e^{t\Delta_\Omega^{(\sigma)}}$, that is,
\begin{equation*}
 	(e^{t\Delta_\Omega^{(\sigma)}}f)(x) = \int_\Omega k_\Omega^{(\sigma)}(t, x, y)f(y)\,dy\,.
\end{equation*} 
It is well known (and follows from the Beurling--Deny criterion \cite[Section 1.3]{Davies_heatkernels} -- note that we apply this to the operator $-\Delta_\Omega^{(\sigma)}-\lambda_1(-\Delta_\Omega^{(\sigma)})\geq 0$) that $e^{t\Delta_\Omega^{(\sigma)}}$ is positivity preserving and therefore $k_\Omega^{(\sigma)}\geq 0$.

Our results in this section are conditional on the fact that the heat kernel satisfies a Gaussian upper bound. More precisely, we work under the following assumption.

\begin{assumption}\label{gaussianboundass}
    There are $M_\sigma, C, t_0>0, \Lambda\geq 0$ such that
    \begin{equation}\label{eq: Assumed Gaussian bound}
    	k_\Omega^{(\sigma)}(t, x, x') \leq M_\sigma \max\{1, (t/t_0)^{\frac{d}2}\}t^{-\frac{d}2}e^{-\frac{|x-x'|^2}{Ct}+\Lambda t} \quad \mbox{for all }t>0, x, x'\in \Omega \,.
    \end{equation}
\end{assumption}

The following proposition gives a sufficient condition for this assumption to hold.

\begin{proposition}\label{gaussianbound}
    If $\sigma_\limminus\in L^\infty(\partial\Omega)$, then Assumption \ref{gaussianboundass} is satisfied. 
\end{proposition}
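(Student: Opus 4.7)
The natural approach is to reduce to the case of bounded $\sigma$ via a domination argument, and then apply Davies' method of exponentially weighted semigroups. First, I would observe that $\sigma \geq -\|\sigma_\limminus\|_{L^\infty}$ pointwise on $\partial\Omega$, which at the level of quadratic forms gives $a_\sigma \geq a_{-\|\sigma_\limminus\|_{L^\infty}}$ and hence the operator inequality $e^{t\Delta_\Omega^{(\sigma)}} \leq e^{t\Delta_\Omega^{(-\|\sigma_\limminus\|_{L^\infty})}}$. Since both semigroups are positivity-preserving with nonnegative kernels, Ouhabaz's domination theorem upgrades this to the pointwise bound $k_\Omega^{(\sigma)} \leq k_\Omega^{(-\|\sigma_\limminus\|_{L^\infty})}$. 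It therefore suffices to establish \eqref{eq: Assumed Gaussian bound} when $\sigma \equiv -c$ is a nonpositive constant.

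For such bounded $\sigma$, the Sobolev trace inequality
\[
\int_{\partial\Omega} |u|^2 \, d\Haus^{d-1} \leq \epsilon \int_\Omega |\nabla u|^2 \, dx + C_\epsilon \int_\Omega |u|^2 \, dx, \qquad u \in H^1(\Omega),
\]
valid on a Lipschitz domain via the extension property, yields -- after choosing $\epsilon$ small relative to $\|\sigma\|_{L^\infty}$ -- a constant $\Lambda = \Lambda(\|\sigma\|_{L^\infty}, \Omega)$ with
\[
a_\sigma(u,u) + \Lambda \|u\|^2 \geq \tfrac12 \|\nabla u\|^2 + \Lambda \|u\|^2, \qquad u \in H^1(\Omega).
\]
Combined with the Sobolev inequality on $\Omega$, the shifted form is thus equivalent to $\|u\|_{H^1}^2$, and the Nash--Varopoulos ultracontractivity scheme gives
\[
\|e^{t\Delta_\Omega^{(\sigma)}}\|_{L^1 \to L^\infty} \leq C\, t^{-d/2} e^{\Lambda t}, \qquad 0 < t \leq t_0.
\]

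To insert the Gaussian factor I would apply the Davies perturbation trick. For a Lipschitz function $\phi$ with $\|\nabla \phi\|_{L^\infty} \leq \alpha$, a direct computation (noting that the boundary term is invariant under the twist since the factors $e^{-\phi}$ and $e^{\phi}$ cancel on $\partial\Omega$) shows that the twisted form $(u,v) \mapsto a_\sigma(e^{-\phi}u, e^{\phi}v)$ has real part bounded below by $a_\sigma(u,u) - \alpha^2 \|u\|^2$. Applying the ultracontractive bound above to the twisted semigroup and reading it pointwise,
\[
e^{-\phi(x)} k_\Omega^{(\sigma)}(t,x,y) e^{\phi(y)} \leq C\, t^{-d/2} e^{(\Lambda + \alpha^2) t}.
\]
Taking $\phi(\cdot) = \alpha | \cdot - x|$ and optimizing $\alpha = |x-y|/(2t)$ produces the desired short-time bound $k_\Omega^{(\sigma)}(t,x,y) \leq C\, t^{-d/2} e^{\Lambda t - |x-y|^2/(4t)}$ for $0 < t \leq t_0$. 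The long-time regime, captured by the factor $\max\{1, (t/t_0)^{d/2}\}$, then follows from iterating the semigroup property together with the trivial $L^2 \to L^2$ bound $\|e^{t\Delta_\Omega^{(\sigma)}}\|_{L^2 \to L^2} \leq e^{\Lambda t}$.

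The main technical hurdle I expect is the initial domination step: passing from an operator to a pointwise kernel inequality in the presence of an unbounded singular boundary potential $\sigma_\limplus$ requires invoking Ouhabaz's domination theorem for forms with measure-valued perturbations. A safer alternative, which I would use if domination proves awkward to justify directly, is to approximate $\sigma$ by $\sigma \wedge n \in L^\infty(\partial\Omega)$, establish the Gaussian bound uniformly in $n$ by the bounded-$\sigma$ argument above, and then pass to the limit using monotone convergence of the associated forms together with Fatou's lemma for the kernels. The remaining ingredients -- the Sobolev trace inequality, Nash's inequality on Lipschitz $\Omega$, and Davies' weighted estimate -- are then classical.
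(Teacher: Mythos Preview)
Your proposal is essentially correct and reaches the same conclusion as the paper, but the route differs in two places worth noting. First, for the reduction to the bounded case the paper also compares $k_\Omega^{(\sigma)}$ with $k_\Omega^{(-\sigma_\limminus)}$, but establishes the pointwise domination via the Duhamel formula
\[
\bigl( f, e^{t\Delta^{(\sigma)}_\Omega} g \bigr) = \bigl( f, e^{t\Delta^{(-\sigma_\limminus)}_\Omega} g \bigr) - \int_0^t \int_{\partial\Omega} \bigl(e^{(t-s)\Delta_\Omega^{(-\sigma_\limminus)}}f \bigr)\, \sigma_\limplus \, \bigl( e^{s\Delta_\Omega^{(\sigma)}}g\bigr)\,d\mathcal H^{d-1}\,ds,
\]
together with positivity preservation of both semigroups. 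Your Ouhabaz-criterion (or monotone-approximation) argument is an equally valid alternative; the Duhamel route has the advantage of being self-contained and avoids the form-domain technicalities you flag. Second, for the bounded case the paper simply cites ter~Elst--Wong and Daners rather than rederiving the bound; your Davies--Nash sketch is correct and standard, though strictly speaking more than is needed here.

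One small correction: the step ``$a_\sigma \geq a_{-c}$ and hence the operator inequality $e^{t\Delta_\Omega^{(\sigma)}} \leq e^{t\Delta_\Omega^{(-c)}}$'' is not valid as stated, since $x\mapsto e^{-tx}$ is not operator monotone. Ouhabaz's criterion does not pass through such an operator inequality; it yields pointwise semigroup domination directly from the form inequality $a_\sigma(u,v)\geq a_{-c}(u,v)$ for $u,v\geq 0$ (which holds here since $\sigma+c\geq 0$) together with the common form domain $H^1(\Omega)$ and positivity preservation. Rephrase that sentence accordingly and the argument goes through.
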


\begin{remark}
    Proposition \ref{gaussianbound} remains valid when our standing assumption on $\sigma_\limplus$ (namely $\sigma_\limplus\in L^p(\partial\Omega)$ with some $p>1$ for $d=2$ and $p\geq d-1$ for $d>2$) is replaced by the weak assumption $\sigma_\limplus\in L^1(\partial\Omega)$. In this case the operator is defined with form domain $\{u\in H^1(\Omega):\ \sigma_\limplus |u|_{\partial\Omega}|^2 \in L^1(\partial\Omega)\}$. Since for our results in this section we will need to impose stronger integrability assumptions on $\sigma_\limplus$ we omit a proof of this assertion. 
\end{remark}

\begin{proof}
    Under the assumption $\sigma\in L^\infty(\partial\Omega)$, the assertion is shown in \cite{ElstWong_20}, extending the previous result in \cite{Daners_00} for nonnegative, bounded $\sigma$. (The paper \cite{ElstWong_20} also mentions \cite{Daners_09}, where we cannot find the claimed result, however.) In passing we mention that the assertion is also known under the assumption $\sigma\geq 0$, see \cite{Gesztesy_etal_PAMS15}, but we will not need this result but rather reprove it.

    The fact that Assumption \ref{gaussianboundass} remains valid also for unbounded $\sigma_\limplus$ follows from the pointwise bound
    $$
    k_\Omega^{(\sigma)}(t, x, x') \leq k_\Omega^{(-\sigma_\limminus)}(t, x, x')
    \qquad\text{for all}\ x,x'\in\Omega \,.
    $$
    To justify this bound, we use the Duhamel formula
    $$
    \bigl( f, e^{t\Delta^{(\sigma)}_\Omega} g \bigr) = \bigl( f, e^{t\Delta^{(-\sigma_\limminus)}_\Omega} g \bigr) - \int_0^t \int_{\partial\Omega} \bigl(e^{(t-s)\Delta_\Omega^{(-\sigma_\limminus)}}f \bigr)(y) \bigl( e^{s\Delta_\Omega^{(\sigma)}}g\bigr)(y) \sigma(y)_\limplus \,d\mathcal H^{d-1}(y)\,ds \,,
    $$
    valid for all $f,g\in L^2(\Omega)$. Note that the boundary integral is well defined since $e^{(t-s)\Delta_\Omega^{(-\sigma_\limminus)}}f$ and $e^{s\Delta_\Omega^{(\sigma)}}g$ belong to $H^1(\Omega)$, which is the common form domain of $-\Delta^{(-\sigma_\limminus)}_\Omega$ and $-\Delta_\Omega^{(\sigma)}$. Since $e^{(t-s)\Delta_\Omega^{(-\sigma_\limminus)}}$ and $e^{s\Delta_\Omega^{(\sigma)}}$ are positivity preserving, we see that for $f,g\geq 0$, we have
    $$
    \int_{\partial\Omega} \bigl(e^{(t-s)\Delta_\Omega^{(-\sigma_\limminus)}}f \bigr)(y) \bigl( e^{s\Delta_\Omega^{(\sigma)}}g\bigr)(y) \sigma(y)_\limplus \,d\mathcal H^{d-1}(y) \geq 0 \,.
    $$
    Thus, for $f,g\geq 0$ we have $( f, e^{t\Delta^{(\sigma)}_\Omega} g ) \leq ( f, e^{t\Delta^{(-\sigma_\limminus)}_\Omega} g )$, which implies the claimed pointwise bound.    
\end{proof}


\subsection{Approximation of the Robin heat kernel}

We shall construct a sequence of approximations $K_j$ of $k_\Omega^{(\sigma)}$ in terms of the Neumann heat kernel $k_\Omega^{(0)}$. The approximations are defined recursively by
$$
K_0(t, x, x'):=0
$$ 
and, for $j\geq 1$,
\begin{equation*}
	K_j(t, x, x') := k_\Omega^{(0)}(t, x, x') - \int_0^t \int_{\partial\Omega} k_\Omega^{(0)}(t-s, x, y)K_{j-1}(s, y, x')\sigma(y)\,d\Haus^{d-1}(y)ds\,.
\end{equation*}
Explicitly, we have
\begin{align*}
	K_0(t, x, x')=0\,, \quad K_1(t, x, x') = k_\Omega^{(0)}(t, x, x')\,,
\end{align*}
and, for $j\geq 2$,
\begin{align*}
	K_j(t, x, x') &= k_\Omega^{(0)}(t, x, x')\\
	&\quad +\sum_{i=1}^{j-1}(-1)^i \int_{\{s \in (0, t)^i: s_l>s_{l+1}\}}\int_{(\partial\Omega)^{i}}\Bigl(\prod_{l=0}^{i}k_\Omega^{(0)}(s_l-s_{l+1}, y_l, y_{l+1})\Bigr)\Bigl|_{\substack{s_0=t,\hspace{11pt}\\ s_{i+1}=0, \hspace{2pt}\\ y_0=x,\hspace{9pt} \\ y_{i+1}=x'}} \\
	&\quad \quad \times\Bigl(\prod_{l=1}^{i}\sigma(y_i)\Bigr)\Bigl(\prod_{l=1}^id\Haus^{d-1}(y_{l})\Bigr)\Bigl(\prod_{l=1}^ids_{l}\Bigr)\,.
\end{align*}

\begin{proposition}\label{prop: improved kernel approx}
Assume that $\Omega \subset \R^d$ is open, bounded, with Lipschitz regular boundary, that $\sigma \in L^p(\partial\Omega)$ with $p>d-1$, and that Assumption \ref{gaussianboundass} is satisfied. Then, for each $j\geq 0$,
\begin{align*}
	|k_\Omega^{(\sigma)}(&t, x, x') - K_j(t, x, x')| \\
	&\lesssim_{\Omega, j,p} M_\sigma  \|\sigma\|_{L^p(\partial\Omega)}^j\max\{1, (t/t_0)^{\frac{(j+1)d}2}\}t^{-\frac{d-j}{2}- j\frac{d-1}{2p}} e^{-\frac{|x-x'|^2+(d_\Omega(x')+d_\Omega(x))^2}{2^jC t}+ \Lambda t}\,.
\end{align*}
\end{proposition}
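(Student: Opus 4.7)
My plan is to prove the proposition by induction on $j$, pivoting on a Duhamel-type identity comparing $k_\Omega^{(\sigma)}$ and $k_\Omega^{(0)}$.

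The starting point is the pointwise Duhamel identity (the analogue of the weak version used in the proof of Proposition~\ref{gaussianbound})
$$k_\Omega^{(\sigma)}(t,x,x') = k_\Omega^{(0)}(t,x,x') - \int_0^t\!\int_{\partial\Omega} k_\Omega^{(0)}(t-s,x,y)\,\sigma(y)\, k_\Omega^{(\sigma)}(s,y,x')\,d\Haus^{d-1}(y)\,ds\,.$$
Subtracting the recursion defining $K_j$ produces the telescoping identity
$$k_\Omega^{(\sigma)}(t,x,x') - K_j(t,x,x') = -\int_0^t\!\int_{\partial\Omega} k_\Omega^{(0)}(t-s,x,y)\,\sigma(y)\bigl[k_\Omega^{(\sigma)}(s,y,x') - K_{j-1}(s,y,x')\bigr]d\Haus^{d-1}(y)\,ds\,,$$
which drives the induction. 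The base case $j=0$ reduces to Assumption~\ref{gaussianboundass}: the extra factor $e^{-(d_\Omega(x)+d_\Omega(x'))^2/(Ct)}$ can be absorbed into the implicit constant using the boundedness of $\Omega$.

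For the inductive step I plan to substitute the induction hypothesis into the bracketed factor. Since $y\in\partial\Omega$ forces $d_\Omega(y)=0$, and since $k_\Omega^{(0)}$ itself satisfies a Gaussian upper bound (Assumption~\ref{gaussianboundass} with $\sigma=0$), the integrand is dominated by a product of three Gaussian factors together with $|\sigma(y)|$. I would then split each Gaussian into two pieces. One piece, using $|x-y|\geq d_\Omega(x)$ and $|y-x'|\geq d_\Omega(x')$ for $y\in\partial\Omega$, is concatenated via the elementary inequality
$$\frac{a^2}{t-s}+\frac{b^2}{s}\geq \frac{(a+b)^2}{t}\qquad(a,b\geq 0,\ s\in(0,t))\,,$$
to extract the factor $e^{-(d_\Omega(x)+d_\Omega(x'))^2/(2^j Ct)}$; the halving of the available Gaussian weight at each step is what forces the $2^j$ in the denominator. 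The other piece is a surface Gaussian in $y$ whose $L^{p'}(\partial\Omega)$-norm is controlled, uniformly in the base point, by an explicit power of the relevant time scale, using $(d-1)$-Ahlfors regularity of $\partial\Omega$; the hypothesis $p>d-1$ guarantees $p'<\infty$ and hence the finiteness of this norm.

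H\"older's inequality against $\sigma\in L^p(\partial\Omega)$ then converts the $y$-integral into $\|\sigma\|_{L^p(\partial\Omega)}$ times powers of $(t-s)$ and $s$, and the resulting $s$-integration is a Beta integral which evaluates to the claimed $t^{-(d-j)/2-j(d-1)/(2p)}$. The prefactors $M_\sigma$, $\max\{1,(\cdot/t_0)^{\cdot}\}$ and $e^{\Lambda\cdot}$ propagate unchanged (the max-factors compose into $\max\{1,(t/t_0)^{(j+1)d/2}\}$ after handling the sub-intervals $s\leq t/2$ and $s\geq t/2$ separately). \textbf{Main obstacle.} The delicate point is the simultaneous choice of Gaussian splittings so that (i) the extracted $d_\Omega$-Gaussian exits with denominator exactly $2^j Ct$, (ii) the surface Gaussian retains enough mass for the H\"older step to be uniform in the base point and in $s$, and (iii) the $s$-integrand remains integrable at both endpoints $s=0$ and $s=t$. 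This is careful but essentially mechanical bookkeeping dictated by the doubling of Gaussian denominators at each induction step.
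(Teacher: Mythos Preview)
Your approach matches the paper's---induction on $j$ via Duhamel, then a H\"older estimate against $\sigma\in L^p$, Ahlfors regularity for the surface Gaussian, and a Beta-type $s$-integral (packaged in the paper as Lemma~\ref{lem: Gaussian bdry integral bound})---but there is a concrete gap: the factor $e^{-|x-x'|^2/(2^jCt)}$ is never produced. You halve the combined Gaussian once, spending one half on the concatenation inequality for $e^{-(d_\Omega(x)+d_\Omega(x'))^2/(2^jCt)}$ and the other half on the $L^{p'}(\partial\Omega)$ bound; that leaves nothing for $|x-x'|^2$. Halving again would recover it but then the induction closes with denominator $4^jC$ rather than $2^jC$---enough for every application in the paper (which only uses the diagonal $x=x'$), but not the proposition as stated. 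The paper avoids the second halving via the exact identity
\[
\frac{|x-y|^2}{1-\tau}+\frac{|x'-y|^2}{\tau}=|x-x'|^2+\frac{|y-(\tau x+(1-\tau)x')|^2}{\tau(1-\tau)}\qquad(\tau=s/t)\,,
\]
applied to the remaining half-Gaussian: this peels off $e^{-|x-x'|^2/(2^jCt)}$ for free and simultaneously recenters the surface integrand as a single Gaussian at $\tau x+(1-\tau)x'$, which is precisely what reconciles your desiderata (i)--(iii).

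A smaller slip: in the base case the $d_\Omega$-factor \emph{strengthens} the bound, so it cannot be ``absorbed'' from Assumption~\ref{gaussianboundass}---the direction of the inequality is wrong. The paper also elides this; the honest resolution is that the inductive step only ever evaluates the $(j{-}1)$-bound at boundary points $y$, where $d_\Omega(y)=0$, so the $d_\Omega$-decoration in the hypothesis is never actually used and the true $j=0$ input is simply Assumption~\ref{gaussianboundass}.
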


By assumption $p>d-1$, for any fixed $\alpha$ we can therefore choose $j$ so large that the error in the approximation is $o(t^\alpha)$ as $t \to 0^\limplus$. However, as the constructed approximation $K_j$ involves iterated integrals of $k_\Omega^{(0)}$ and $\sigma$, it is not an easy task to precisely understand the size of the correction coming from these integrals. Heuristically, the integral term with index $i$ should be $O(t^{-\frac{d-i}2})$ when $|x-x'|\lesssim \sqrt{t}, d_\Omega(x) \lesssim \sqrt{t}$ and exponentially small everywhere else.

Our proof of Proposition~\ref{prop: improved kernel approx} uses the following lemma.
\begin{lemma}\label{lem: Gaussian bdry integral bound}
    Assume that $\Omega \subset \R^d$ is open, bounded, with Lipschitz regular boundary, that $\sigma \in L^p(\partial\Omega)$ for some $p>1$. Fix $a, b < \frac{d+1}2-\frac{d-1}{2p}$. Then for all $x, x'\in \Omega, t>0, C>0,$
    \begin{align*}
		 \int_0^{t}\int_{\partial\Omega} &|\sigma(y)|\frac{e^{- \frac{|x-y|^2}{C(t-s)}-\frac{|x'-y|^2}{Cs}}}{(t-s)^{a}s^{b}}\,d\Haus^{d-1}(y)ds\\
   &\lesssim_{d,p,a,b} C^{\frac{d-1}{2p'}} \mathcal{M}(\Omega)^{\frac{1}{p'}} \|\sigma\|_{L^{p}(\partial\Omega)} t^{-(a+b)+\frac{d+1}2-\frac{d-1}{2p}}e^{-\frac{(d_\Omega(x)+d_\Omega(x'))^2}{2Ct}- \frac{|x-x'|^2}{2Ct}}\,,
	\end{align*}
    where 
    \begin{equation*}
        \mathcal{M}(\Omega):= \sup_{r>0, z\in \R^d}\frac{\Haus^{d-1}(\partial\Omega \cap B_r(z))}{r^{d-1}}\,.
    \end{equation*}
\end{lemma}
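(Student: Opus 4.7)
My plan is to proceed in three stages: (i) a pointwise decomposition of the exponent $|x-y|^2/(C(t-s))+|x'-y|^2/(Cs)$ that extracts the two prefactors $e^{-(d_\Omega(x)+d_\Omega(x'))^2/(2Ct)}$ and $e^{-|x-x'|^2/(2Ct)}$ while retaining Gaussian decay in $y$; (ii) H\"older's inequality in $y$ with exponents $p,p'$, combined with the measure condition defining $\mathcal{M}(\Omega)$; (iii) an $s$-integration that reduces to a Beta function and converges precisely under the stated hypothesis on $a,b$.

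For step~(i), I would start from the identity obtained by completing the square,
\[
\frac{|x-y|^2}{C(t-s)} + \frac{|x'-y|^2}{Cs} = \frac{|x-x'|^2}{Ct} + \frac{t\,|y-y^*(s)|^2}{Cs(t-s)}, \qquad y^*(s):=\frac{s\,x+(t-s)\,x'}{t},
\]
together with the Cauchy--Schwarz lower bound (valid since $y\in\partial\Omega$, so $|x-y|\geq d_\Omega(x)$ and $|x'-y|\geq d_\Omega(x')$)
\[
\frac{|x-y|^2}{C(t-s)} + \frac{|x'-y|^2}{Cs} \geq \frac{(|x-y|+|x'-y|)^2}{Ct} \geq \frac{(d_\Omega(x)+d_\Omega(x'))^2}{Ct}\,.
\]
Writing the left-hand side as $\tfrac12$ plus $\tfrac12$ of itself, and applying the Cauchy--Schwarz bound to one half and the identity to the other, will yield
\[
\frac{|x-y|^2}{C(t-s)} + \frac{|x'-y|^2}{Cs} \geq \frac{(d_\Omega(x)+d_\Omega(x'))^2 + |x-x'|^2}{2Ct} + \frac{t\,|y-y^*(s)|^2}{2Cs(t-s)}\,.
\]
Exponentiating pulls out both desired prefactors with the sharp constant $1/(2Ct)$ and leaves a Gaussian in $y$ of width of order $\sqrt{Cs(t-s)/t}$ for the subsequent boundary integration.

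For step~(ii), H\"older's inequality in $y$ reduces matters to bounding $\int_{\partial\Omega} e^{-p' t|y-y^*|^2/(2Cs(t-s))}\,d\Haus^{d-1}(y)$. A layer-cake computation handles any Gaussian centered at $z\in\R^d$ with scale $R$:
\[
\int_{\partial\Omega} e^{-|y-z|^2/R^2}\,d\Haus^{d-1}(y) = \int_0^1 \Haus^{d-1}\bigl(\partial\Omega\cap B_{R\sqrt{-\ln\lambda}}(z)\bigr)\,d\lambda \lesssim_d \mathcal{M}(\Omega)\,R^{d-1}\,,
\]
where I use the defining property of $\mathcal{M}(\Omega)$ and $\int_0^1(-\ln\lambda)^{(d-1)/2}\,d\lambda=\Gamma((d+1)/2)$. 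Taking $R^2=2Cs(t-s)/(p't)$ and the $1/p'$-th root produces the factors $\mathcal{M}(\Omega)^{1/p'}$, $C^{(d-1)/(2p')}$, and $(s(t-s)/t)^{(d-1)/(2p')}$, up to a constant depending only on $d,p$.

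Finally for step~(iii), substituting $s=tu$ the remaining $s$-integral becomes
\[
\int_0^t \frac{(s(t-s)/t)^{(d-1)/(2p')}}{(t-s)^a s^b}\,ds = t^{(d-1)/(2p')+1-a-b}\int_0^1 u^{(d-1)/(2p')-b}(1-u)^{(d-1)/(2p')-a}\,du\,,
\]
a Beta function that converges precisely when $a,b<(d-1)/(2p')+1=(d+1)/2-(d-1)/(2p)$, matching the hypothesis. Using $(d-1)/(2p')=(d-1)/2-(d-1)/(2p)$ gives the claimed power $t^{-(a+b)+(d+1)/2-(d-1)/(2p)}$. The main obstacle I anticipate is in step~(i): a naive splitting applying Cauchy--Schwarz to each half separately only extracts $(A+B)/4$ in the exponent, with $A=(d_\Omega(x)+d_\Omega(x'))^2/(Ct)$ and $B=|x-x'|^2/(Ct)$. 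The sharp $1/(2Ct)$ appearing in the statement is recovered only by pairing the \emph{exact} identity (which contributes the $|x-x'|^2$ part together with a surviving Gaussian in $y$) with the pure Cauchy--Schwarz bound (which contributes only the $(d_\Omega(x)+d_\Omega(x'))^2$ part, with no remaining $y$-dependence to spend).
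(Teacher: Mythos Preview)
Your proposal is correct and follows essentially the same approach as the paper: split the exponent in half, apply the Cauchy--Schwarz/distance bound to one half to extract $e^{-(d_\Omega(x)+d_\Omega(x'))^2/(2Ct)}$, apply the completing-the-square identity to the other half to extract $e^{-|x-x'|^2/(2Ct)}$ while retaining the Gaussian in $y$, then use H\"older, the layer-cake bound via $\mathcal{M}(\Omega)$, and a Beta integral. The only cosmetic difference is that the paper performs the change of variable $s=t\tau$ before completing the square rather than at the final $s$-integration; the algebra and resulting bounds are identical.
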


Note that, since $\Omega$ is bounded and has Lipschitz regular boundary, the supremum defining $\mathcal{M}(\Omega)$ is finite.

\begin{proof}
    Since $\frac{|x-y|^2}{t-s}+\frac{|x'-y|^2}{s}\geq \frac{(d_\Omega(x)+d_\Omega(x'))^2}{t}$ for all $s\in (0, t)$ and $y \in \partial\Omega$, it follows that
	\begin{equation}\label{eq: extracting decay}
		e^{- \frac{|x-y|^2}{C(t-s)}-\frac{|x'-y|^2}{Cs}} \leq e^{- \frac{(d_\Omega(x)+d_\Omega(x'))^2}{2Ct}} e^{- \frac{|x-y|^2}{2C(t-s)}-\frac{|x'-y|^2}{2Cs}}\,.
	\end{equation}
	Therefore,
	\begin{equation}
	\begin{aligned}
		 \int_0^{t}&\int_{\partial\Omega} |\sigma(y)|\frac{e^{- \frac{|x-y|^2}{C(t-s)}-\frac{|x'-y|^2}{Cs}}}{(t-s)^{a}s^{b}}\,d\Haus^{d-1}(y)ds\\
			&\leq 
				e^{- \frac{(d_\Omega(x)+d_\Omega(x'))^2}{2Ct}}\int_0^{t}\int_{\partial\Omega} |\sigma(y)|\frac{e^{- \frac{1}{2C}\bigl(\frac{|x-y|^2}{t-s}+\frac{|x'-y|^2}{s}\bigr)}}{(t-s)^{a}s^{b}}\,d\Haus^{d-1}(y)ds\\
			&= 
				t^{-(a+b)+1}e^{- \frac{(d_\Omega(x)+d_\Omega(x'))^2}{2Ct}}\int_0^1\int_{\partial\Omega}|\sigma(y)| \frac{e^{- \frac{1}{2Ct}\bigl(\frac{|x-y|^2}{1-\tau}+\frac{|x'-y|^2}{\tau}\bigr)}}{(1-\tau)^{a}\tau^{b}}\,d\Haus^{d-1}(y)d\tau\\
				&= 
				t^{-(a+b)+1}e^{-\frac{(d_\Omega(x)+d_\Omega(x'))^2}{2Ct}- \frac{|x-x'|^2}{2Ct}}\int_0^1\int_{\partial\Omega}|\sigma(y)| \frac{e^{- \frac{|y-(\tau x+(1-\tau)x')|^2}{2Ct(1-\tau)\tau}}}{(1-\tau)^{a}\tau^{b}}\,d\Haus^{d-1}(y)d\tau\,. \hspace{-25pt}
	\end{aligned}
	\end{equation}

    By H\"older's inequality
    \begin{align*}
    \int_0^1\int_{\partial\Omega}&|\sigma(y)| \frac{e^{- \frac{|y-(\tau x+(1-\tau)x')|^2}{2Ct(1-\tau)\tau}}}{(1-\tau)^{a}\tau^{b}}\,d\Haus^{d-1}(y)d\tau\\
    &=
     \int_0^1\frac{1}{(1-\tau)^{a}\tau^{b}}\biggl(\int_{\partial\Omega}|\sigma(y)|e^{- \frac{|y-(\tau x+(1-\tau)x')|^2}{2Ct(1-\tau)\tau}} \,d\Haus^{d-1}(y)\biggr)d\tau\\
     &\leq
     \|\sigma\|_{L^{p}(\partial\Omega)}\int_0^1\frac{1}{(1-\tau)^{a}\tau^{b}}\biggl(\int_{\partial\Omega}e^{- p'\frac{|y-(\tau x+(1-\tau)x')|^2}{2Ct(1-\tau)\tau}} \,d\Haus^{d-1}(y)\biggr)^{\frac{1}{p'}}d\tau\,.
    \end{align*}
	Using the co-area formula we can estimate the inner integral as
	\begin{align*}
		\int_{\partial\Omega} &e^{- p'\frac{|y-(\tau x+(1-\tau)x')|^2}{2Ct(1-\tau)\tau}}\,d\Haus^{d-1}(y)\\
		&=\int_{0}^1 \Haus^{d-1}(\partial\Omega \cap \{y\in \R^d: e^{- p'\frac{ |y-(\tau x+(1-\tau)x')|^2}{2Ct(1-\tau)\tau}}\geq \eta\})\,d\eta\\
		&=
		\int_0^1\Haus^{d-1}(\partial\Omega \cap B_{\sqrt{2Ct(1-\tau)\tau \ln(1/\eta)/p'}}(\tau x + (1-\tau)x'))\,d\eta\\
		&\leq
		\Bigl(\frac{2C}{p'}\Bigr)^{\frac{d-1}2}\mathcal{M}(\Omega)t^{\frac{d-1}{2}}(1-\tau)^{\frac{d-1}{2}}\tau^{\frac{d-1}{2}} \int_0^1  \ln(1/\eta)^{\frac{d-1}{2}}\,d\eta\\
		&=
		\Gamma\Bigl(\frac{d+1}2\Bigr)\Bigl(\frac{2C}{p'}\Bigr)^{\frac{d-1}2}\mathcal{M}(\Omega)t^{\frac{d-1}{2}}(1-\tau)^{\frac{d-1}{2}}\tau^{\frac{d-1}{2}}\,.
	\end{align*}

	Hence
	\begin{align*}
		\int_0^1&\frac{1}{(1-\tau)^{a}\tau^{b}}\biggl(\int_{\partial\Omega} e^{- p'\frac{|y-(\tau x+(1-\tau)x')|^2}{2Ct(1-\tau)\tau}}\,d\Haus^{d-1}(y)\biggr)^{\frac{1}{p'}}d\tau\\
		&\leq
		\Gamma\Bigl(\frac{d+1}2\Bigr)^{\frac{1}{p'}}\Bigl(\frac{2C}{p'}\Bigr)^{\frac{d-1}{2p'}}\mathcal{M}(\Omega)^{\frac{1}{p'}}t^{\frac{d-1}{2p'}}\int_0^1\frac{1}{(1-\tau)^{a-\frac{d-1}{2p'}}\tau^{b-\frac{d-1}{2p'}}}\,d\tau\\
		&=
		\Gamma\Bigl(\frac{d+1}2\Bigr)^{\frac{1}{p'}}\Bigl(\frac{2C}{p'}\Bigr)^{\frac{d-1}{2p'}}\mathcal{M}(\Omega)^{\frac{1}{p'}}t^{\frac{d-1}{2p'}}\frac{\Gamma(1-a+\frac{d-1}{2p'})\Gamma(1-b+\frac{d-1}{2p'})}{\Gamma(2-a-b+\frac{d-1}{p'})}\,,
	\end{align*}
	where we used that $\max\{a,b\}- \frac{d-1}{2p'}<1$ since $a, b < \frac{d+1}2-\frac{d-1}{2p}$.

    Putting the above together we have proved that
    \begin{align*}
    	 \int_0^{t}&\int_{\partial\Omega} |\sigma(y)|\frac{e^{- \frac{|x-y|^2}{C(t-s)}-\frac{|x'-y|^2}{Cs}}}{(t-s)^{a}s^{b}}\,d\Haus^{d-1}(y)ds\\
    	 &\lesssim_{d,p,a,b} C^{\frac{d-1}{2p'}}\mathcal{M}(\Omega)^{\frac{1}{p'}} \|\sigma\|_{L^{p}(\partial\Omega)} t^{-(a+b)+1+\frac{d-1}{2p'}}e^{-\frac{(d_\Omega(x)+d_\Omega(x'))^2}{2Ct}- \frac{|x-x'|^2}{2Ct}}\\
    	 &=C ^{\frac{d-1}{2p'}}\mathcal{M}(\Omega)^{\frac{1}{p'}} \|\sigma\|_{L^{p}(\partial\Omega)} t^{-(a+b)+\frac{d+1}2-\frac{d-1}{2p}}e^{-\frac{(d_\Omega(x)+d_\Omega(x'))^2}{2Ct}- \frac{|x-x'|^2}{2Ct}}\,,
    \end{align*}
    which completes the proof of Lemma~\ref{lem: Gaussian bdry integral bound}.
\end{proof}

\begin{proof}[Proof of Proposition~\ref{prop: improved kernel approx}]
	We argue by induction in $j$. For $j=0$ we know from \eqref{eq: Assumed Gaussian bound} that
	\begin{equation*}
		|k_\Omega^{(\sigma)}(t, x, x')-K_0(t, x, x')| = |k_\Omega^{(\sigma)}(t, x, x')|\leq M_\sigma \max\{1, (t/t_0)^{\frac{d}2}\}t^{-\frac{d}2}e^{-\frac{|x-x'|^2}{Ct}+\Lambda t}\,,
	\end{equation*}
	which is the claimed bound.

	Fix $j\geq 1$ and assume that the bound in the proposition holds for $|k^{(\sigma)}-K_{j-1}|$. We define 
	\begin{align*}
		w(t, x, x') &:= k_\Omega^{(\sigma)}(t, x, x') - k_\Omega^{(0)}(t, x, x')\,,\\
		W(t, x, x') &:= k_\Omega^{(\sigma)}(t, x, x') - K_{j-1}(t, x, x')\,.
	\end{align*}
	Note that for any fixed $x'\in \Omega$ the function $(t, x) \mapsto w(t, x, x')$ solves
	\begin{equation*}
		\begin{cases}
			(\partial_t-\Delta_{x})w(t, x, x') = 0 & \mbox{for }(t, x)\in (0, \infty)\times \Omega\,,\\
			w(0, x, x') =0 &\mbox{for }x\in \Omega\,,\\
			\frac{\partial}{\partial \nu_{x}}w(t, x, x') = -\sigma(x)k_\Omega^{(\sigma)}(t, x, x') & \mbox{for }(x, t)\in \partial\Omega\,,
		\end{cases}
	\end{equation*}
    in the weak sense, where $\frac{\partial}{\partial \nu_x}$ denotes the normal derivative to $\partial\Omega$ in the outward pointing direction in the $x$-variable. 

	By Duhamel's principle,
	\begin{align*}
		k^{(\sigma)}(t, x, x')-k^{(0)}_\Omega(t, x, x') &= w(t, x, x') \\
        &= -\int_0^t\int_{\partial\Omega}k_\Omega^{(0)}(t-s, x, y)\sigma(y)k_\Omega^{(\sigma)}(s, y, x')\,d\Haus^{d-1}(y)ds\\
        &= -\int_0^t\int_{\partial\Omega}k_\Omega^{(0)}(t-s, x, y)\sigma(y)K_{j-1}(s, y, x')\,d\Haus^{d-1}(y)ds\\
        &\quad -\int_0^t\int_{\partial\Omega}k_\Omega^{(0)}(t-s, x, y)\sigma(y)W(s, y, x')\,d\Haus^{d-1}(y)ds\,.
	\end{align*}
    Rearranging and using the recursive definition of $K_j$ we arrive at
    \begin{equation*}
        k_\Omega^{(\sigma)}(t, x, x')-K_j(t, x, x') = -\int_0^t\int_{\partial\Omega}k_\Omega^{(0)}(t-s, x, y)\sigma(y)W(s, y, x')\,d\Haus^{d-1}(y)ds\,.
    \end{equation*}

	By the assumed bound for $W$ and the fact that $k^{(0)}_\Omega$ satisfies
	\begin{equation*}
		|k_\Omega^{(0)}(t, x, x')| \leq M_0 \max\{1, (t/t_0)^{\frac{d}2}\}t^{-\frac{d}2}e^{-\frac{|x-x'|^2}{Ct}} \quad \mbox{for all }t>0, x, x'\in \Omega\,,
	\end{equation*}
	we find
	\begin{align*}
		\biggl|\int_0^t\int_{\partial\Omega}& k_\Omega^{(0)}(t-s, x, y)\sigma(y)W(s, y, x')\,d\Haus^{d-1}(y)ds\Biggr| \\
        &\lesssim_{\Omega, j, p} M_\sigma\|\sigma\|_{L^p(\partial\Omega)}^{j-1} e^{\Lambda t}\int_0^t\int_{\partial\Omega}\frac{|\sigma(y)|\max\{1, ((t-s)/t_0)^{\frac{d}2}\}\max\{1, (s/t_0)^{\frac{dj}2}\}}{(t-s)^{\frac{d}2}s^{\frac{d-j+1}{2}+(j-1)\frac{d-1}{2p}}}\\
        &\quad \quad \times e^{- \frac{|y-x'|^2}{2^{j-1}C s}- \frac{|x-y|^2}{2^{j-1}C(t-s)}}\,d\Haus^{d-1}(y)ds\,.
	\end{align*}

	If $t\geq 2t_0$ then
	\begin{align*}
		\int_0^t&\int_{\partial\Omega}\frac{|\sigma(y)|\max\{1, ((t-s)/t_0)^{\frac{d}2}\}\max\{1, (s/t_0)^{\frac{dj}2}\}}{(t-s)^{\frac{d}2}s^{\frac{d-j+1}{2}+(j-1)\frac{d-1}{2p}}}e^{- \frac{|y-x'|^2}{2^{j-1}C s}- \frac{|x-y|^2}{2^{j-1}C(t-s)}}\,d\Haus^{d-1}(y)ds\\
		&=
		t_0^{-\frac{d}2}\int_0^{t_0}\int_{\partial\Omega}\frac{|\sigma(y)|}{s^{\frac{d-j+1}{2}+(j-1)\frac{d-1}{2p}}}e^{- \frac{|y-x'|^2}{2^{j-1}C s}- \frac{|x-y|^2}{2^{j-1}C(t-s)}}\,d\Haus^{d-1}(y)ds\\
		&\quad +
		t_0^{-\frac{(j+1)d}2}\int_{t_0}^{t-t_0}\int_{\partial\Omega}|\sigma(y)|s^{(j-1)(\frac{d+1}{2}-\frac{d-1}{2p})}e^{- \frac{|y-x'|^2}{2^{j-1}C s}- \frac{|x-y|^2}{2^{j-1}C(t-s)}}\,d\Haus^{d-1}(y)ds\\
		&\quad +
		t_0^{-\frac{jd}2}\int_{t-t_0}^t\int_{\partial\Omega}\frac{|\sigma(y)|}{(t-s)^{\frac{d}2}}s^{(j-1)(\frac{d+1}{2}-\frac{d-1}{2p})}e^{- \frac{|y-x'|^2}{2^{j-1}C s}- \frac{|x-y|^2}{2^{j-1}C(t-s)}}\,d\Haus^{d-1}(y)ds
	\end{align*}
    If instead $t<2t_0$ then
    \begin{align*}
		\int_0^t&\int_{\partial\Omega}\frac{|\sigma(y)|\max\{1, ((t-s)/t_0)^{\frac{d}2}\}\max\{1, (s/t_0)^{\frac{dj}2}\}}{(t-s)^{\frac{d}2}s^{\frac{d-j+1}{2}+(j-1)\frac{d-1}{2p}}}e^{- \frac{|y-x'|^2}{2^{j-1}C s}- \frac{|x-y|^2}{2^{j-1}C(t-s)}}\,d\Haus^{d-1}(y)ds\\
		&\leq
		2^{\frac{(j+1)d}2}\int_0^t\int_{\partial\Omega}\frac{|\sigma(y)|}{(t-s)^{\frac{d}2}s^{\frac{d-j+1}{2}+(j-1)\frac{d-1}{2p}}}e^{- \frac{|y-x'|^2}{2^{j-1}C s}- \frac{|x-y|^2}{2^{j-1}C(t-s)}}\,d\Haus^{d-1}(y)ds\,.
	\end{align*}
	Consequently, we conclude that
	\begin{equation}\label{eq: bound1}
	\begin{aligned}
		|k^{(\sigma)}&(t, x, x')-K_j(t, x, x')| \\
        &\lesssim_{\Omega, j, p} M_\sigma \|\sigma\|_{L^p(\partial\Omega)}^{j-1} e^{\Lambda t}\\
        &\quad \times\max\Biggl\{
		t_0^{-\frac{d}2}\int_0^{t}\int_{\partial\Omega}\frac{|\sigma(y)|}{s^{\frac{d-j+1}{2}+(j-1)\frac{d-1}{2p}}}e^{- \frac{|y-x'|^2}{2^{j-1}C s}- \frac{|x-y|^2}{2^{j-1}C(t-s)}}\,d\Haus^{d-1}(y)ds\,,\\
		&\qquad
		t_0^{-\frac{(j+1)d}2}\int_{0}^{t}\int_{\partial\Omega}|\sigma(y)|s^{(j-1)(\frac{d+1}{2}-\frac{d-1}{2p})}e^{- \frac{|y-x'|^2}{2^{j-1}C s}- \frac{|x-y|^2}{2^{j-1}C(t-s)}}\,d\Haus^{d-1}(y)ds\,,\\
		&\qquad 
		t_0^{-\frac{jd}2}\int_{0}^t\int_{\partial\Omega}\frac{|\sigma(y)|}{(t-s)^{\frac{d}2}}s^{(j-1)(\frac{d+1}{2}-\frac{d-1}{2p})}e^{- \frac{|y-x'|^2}{2^{j-1}C s}- \frac{|x-y|^2}{2^{j-1}C(t-s)}}\,d\Haus^{d-1}(y)ds\,,\\
		&\qquad +\int_0^t\int_{\partial\Omega}\frac{|\sigma(y)|}{(t-s)^{\frac{d}2}s^{\frac{d-j+1}{2}+(j-1)\frac{d-1}{2p}}}e^{- \frac{|y-x'|^2}{2^{j-1}C s}- \frac{|x-y|^2}{2^{j-1}C(t-s)}}\,d\Haus^{d-1}(y)ds
        \Biggr\}\,.
	\end{aligned}
	\end{equation}
	To complete the proof we need to find appropriate estimates for integrals of the form
	\begin{equation}
		 \int_0^{t}\int_{\partial\Omega} |\sigma(y)|\frac{e^{- \frac{|x-y|^2}{2^{j-1}C(t-s)}-\frac{|x'-y|^2}{2^{j-1}Cs}}}{(t-s)^{a}s^{b}}\,d\Haus^{d-1}(y)ds\,.
	\end{equation}
	for the particular values of $a, b$ that appear in \eqref{eq: bound1}. Note that for each of the integrals in \eqref{eq: bound1} the powers $a, b$ satisfy $a, b \leq \frac{d}2 < \frac{d+1}2-\frac{d-1}{2p}$ as $p>d-1$. Therefore, we can bound each of these integrals by using Lemma~\ref{lem: Gaussian bdry integral bound}. This completes the proof of Proposition~\ref{prop: improved kernel approx}.
\end{proof}


\section{Asymptotics for the difference of traces}
\label{sec: Proofs asymptotics of differences}

\subsection{Differences of heat traces}

Let us explore some consequences of Proposition~\ref{prop: improved kernel approx} for the heat traces. As
\begin{align*}
	\Tr(e^{t\Delta_\Omega^{(\sigma)}}) =
	\int_\Omega k_\Omega^{(\sigma)}(t, x, x)\,dx\,,
\end{align*}
one deduces that, for any $j\geq 0$,
\begin{align*}
	\Tr(e^{t\Delta_\Omega^{(\sigma)}})- \Tr(e^{t\Delta_\Omega^{(0)}})
	&=
	\int_\Omega k_\Omega^{(\sigma)}(t, x, x)\,dx -\int_\Omega k_\Omega^{(0)}(t, x, x)\,dx\\
	&=
	\int_\Omega (k_\Omega^{(\sigma)}(t, x, x)-K_j(t, x, x))\,dx\\
	&\quad +\int_\Omega(K_j(t, x, x)-k_\Omega^{(0)}(t, x, x))\,dx\,.
\end{align*}
Under the assumptions of from Proposition~\ref{prop: improved kernel approx} we have
\begin{align*}
 	\biggl|\int_\Omega (k_\Omega^{(\sigma)}&(t, x, x)-K_j(t, x, x))\,dx\biggr| \lesssim \max\{1, (t/t_0)^{\frac{(j+1)d}2}\}t^{-\frac{d-j}{2}- j\frac{d-1}{2p}}e^{\Lambda t} \int_\Omega e^{-\frac{d_\Omega(x)^2}{2^jC t}}\,dx\,.
\end{align*} 
As $\Omega$ is bounded with Lipschitz boundary, we have $|\{x\in \Omega: d_\Omega(x)<\eta\}| = \eta\Haus^{d-1}(\partial\Omega)(1+o_{\eta \to 0^\limplus}(1))$. Therefore, by using the layercake formula, one finds
\begin{equation*}
	\biggl|\int_\Omega e^{-\frac{d_\Omega(x)^2}{2^jC t}}\,dx \biggr| \lesssim \sqrt{t} \quad \mbox{as }t\to 0^\limplus\,.
\end{equation*}

Later, in the proof of Theorem \ref{thm: Difference of Riesz means intro} we will choose $j=2$, but before doing so, let us discuss the results that we obtain with the choice $j=1$. By combining the above bounds, we find
\begin{equation}\label{eq: trace estimate j=1}
	\Tr(e^{t\Delta_\Omega^{(\sigma)}})- \Tr(e^{t\Delta_\Omega^{(0)}})
	=
	 O(t^{-\frac{d-2}{2}- \frac{d-1}{2p}})\,.
\end{equation}

We note that since $p>d-1$ we have $\frac{d-2}2+\frac{d-1}{2p}< \frac{d-1}{2}$. Consequently, the estimate~\eqref{eq: trace estimate j=1} and the two-term asymptotics for $\Tr(e^{t\Delta^{(0)}_\Omega})$ proved in~\cite{Brown93} imply that
\begin{equation}
    \label{eq:heatasymp}
	\Tr(e^{t\Delta_\Omega^{(\sigma)}}) = (4\pi t)^{-\frac{d}2}\Bigl(|\Omega| + \frac{\sqrt{\pi t}}2 \Haus^{d-1}(\partial\Omega)+ o(\sqrt{t})\Bigr) \quad \mbox{as }t \to 0^\limplus\,.
\end{equation}
By using the strategy used to prove \cite[Theorem 1.1]{FrankLarson_24} with Neumann boundary conditions, this two-term heat trace asymptotics can actually be used to give a second proof of Theorem~\ref{main} under the additional assumption that the Gaussian upper bound~\eqref{eq: Assumed Gaussian bound} holds for $k_\Omega^{(\sigma)}$.

In general, one cannot expect to have any further terms in the asymptotic expansion of $\Tr(e^{t\Delta^{(\sigma)}})$ than in \eqref{eq:heatasymp} when $\partial\Omega$ is merely Lipschitz. Indeed, according to Brown~\cite{Brown93} the error term $o(\sqrt{t})$ in the asymptotic expansion for the Neumann problem is sharp on the algebraic scale, i.e.\ for any $\alpha>1/2$ there exists a bounded Lipschitz sets $\Omega$ so that
\begin{equation*}
	\limsup_{t\to 0^\limplus} \frac{\bigl|(4\pi t)^{\frac{d}2}\Tr(e^{t\Delta_\Omega^{(0)}})- |\Omega| -\frac{\sqrt{\pi t}}2 \Haus^{d-1}(\partial\Omega)\bigr|}{t^{\alpha}} = \infty\,.
\end{equation*}
By~\eqref{eq: trace estimate j=1}, the same conclusion holds for the Robin Laplace operators satisfying the assumptions of Proposition~\ref{prop: improved kernel approx}.


To proceed, we will now choose $j= 2$. By combining the above bounds, we find that as $t\to 0^\limplus$, 
\begin{equation}\label{eq: trace estimate}
\begin{aligned}
	\Tr(&e^{t\Delta_\Omega^{(\sigma)}})- \Tr(e^{t\Delta_\Omega^{(0)}})\\
	&=
	\int_\Omega(K_2(t, x, x)-k_\Omega^{(0)}(t, x, x))\,dx + O(t^{-\frac{d-3}{2}- \frac{d-1}{p}})\\
	&=
	 - \int_\Omega\int_0^t\int_{\partial\Omega}k_\Omega^{(0)}(t-s, x, y)k_\Omega^{(0)}(s, y, x)\sigma(y)d\Haus^{d-1}(y)dsdx + O(t^{-\frac{d-3}{2}- \frac{d-1}{p}}) \,.
\end{aligned}
\end{equation}
Using this bound we are now able to prove leading order asympotics for the difference of the Robin and Neumann heat traces.

\begin{theorem}\label{thm: asymptotics heat difference}
	Let $\Omega \subset \R^d$ be open, bounded, with Lipschitz regular boundary. Assume that $\sigma\in L^p(\partial\Omega)$ for some $p>2(d-1)$ and that Assumption \ref{gaussianboundass} is satisfied. Then, 
	\begin{equation*}
		\Tr(e^{t\Delta_\Omega^{(0)}})- \Tr(e^{t\Delta_\Omega^{(\sigma)}}) = \frac{1}{2\pi (4\pi t)^{\frac{d-2}2}}\int_{\partial\Omega}\sigma(x)\,d\Haus^{d-1}(x)+ o(t^{-\frac{d-2}2})\quad \mbox{as }t\to 0^\limplus\,.
	\end{equation*} 		
\end{theorem}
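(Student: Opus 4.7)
The plan is to start from the identity \eqref{eq: trace estimate} proved just above the theorem, which expresses the difference of heat traces as an explicit triple integral plus an error of order $O(t^{-(d-3)/2-(d-1)/p})$. The hypothesis $p>2(d-1)$ is equivalent to $-(d-3)/2-(d-1)/p > -(d-2)/2$, so this error is already $o(t^{-(d-2)/2})$ and it suffices to extract the leading behavior of the main triple integral
$$I(t) := -\int_\Omega\int_0^t\int_{\partial\Omega} k_\Omega^{(0)}(t-s, x, y)\,k_\Omega^{(0)}(s, y, x)\,\sigma(y)\,d\Haus^{d-1}(y)\,ds\,dx\,.$$

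First I would swap the order of integration, bring the $x$-integration to the inside, and exploit the symmetry $k_\Omega^{(0)}(t-s,x,y)=k_\Omega^{(0)}(t-s,y,x)$ together with the semigroup identity
$$\int_\Omega k_\Omega^{(0)}(t-s,y,x)\,k_\Omega^{(0)}(s,x,y)\,dx = k_\Omega^{(0)}(t,y,y)\,.$$
Since the resulting integrand is independent of $s$, the $s$-integration produces a factor of $t$, giving
$$I(t) = -t\int_{\partial\Omega} \sigma(y)\, k_\Omega^{(0)}(t,y,y)\,d\Haus^{d-1}(y)\,.$$

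Next I would analyze the Neumann heat kernel on the boundary diagonal. Two ingredients are needed: a uniform bound $k_\Omega^{(0)}(t,y,y)\lesssim t^{-d/2}$ for small $t$ and all $y \in \partial\Omega$, which follows from the Gaussian upper bound for the Neumann heat kernel on Lipschitz domains (Proposition \ref{gaussianbound} applied with $\sigma\equiv 0$) combined with continuity of $k_\Omega^{(0)}$ up to the boundary; and the pointwise limit $(4\pi t)^{d/2}\, k_\Omega^{(0)}(t,y,y) \to 2$ as $t\to 0^+$, valid for $\Haus^{d-1}$-a.e. $y\in\partial\Omega$. With these in hand, the function $t^{d/2}\,k_\Omega^{(0)}(t,y,y)\sigma(y)$ is dominated by a constant multiple of $|\sigma(y)|\in L^1(\partial\Omega)$ and converges a.e. to $2(4\pi)^{-d/2}\sigma(y)$, so dominated convergence yields
$$t\int_{\partial\Omega} \sigma(y)\, k_\Omega^{(0)}(t,y,y)\,d\Haus^{d-1}(y) = \frac{2\,t^{1-d/2}}{(4\pi)^{d/2}}\int_{\partial\Omega}\sigma\,d\Haus^{d-1}+o(t^{-(d-2)/2})\,.$$
The elementary identity $2(4\pi)^{-d/2}t^{1-d/2} = \frac{1}{2\pi(4\pi t)^{(d-2)/2}}$ then produces the claimed asymptotic.

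The principal obstacle is the pointwise limit $(4\pi t)^{d/2}\, k_\Omega^{(0)}(t,y,y)\to 2$ at $\Haus^{d-1}$-a.e. boundary point of a merely Lipschitz domain. The heuristic is clear: by Rademacher's theorem $\partial\Omega$ admits a tangent hyperplane at almost every point, and after rescaling by $\sqrt{t}$ the domain looks locally like a half-space, for which the reflection principle yields the value $2(4\pi t)^{-d/2}$ on the boundary diagonal. Making this rigorous requires a localization based on the Gaussian off-diagonal decay, so that only a $\sqrt{t}$-neighborhood of $y$ contributes to $k_\Omega^{(0)}(t,y,y)$ up to exponentially small error, followed by a comparison with the half-space Neumann kernel via a parametrix or reflection construction on the Lipschitz graph parametrizing $\partial\Omega$ near $y$.
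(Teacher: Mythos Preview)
Your proof is correct and mirrors the paper's: the same reduction via \eqref{eq: trace estimate} and the semigroup identity collapses the triple integral to $t\int_{\partial\Omega}\sigma(y)\,k_\Omega^{(0)}(t,y,y)\,d\Haus^{d-1}(y)$, after which the boundary asymptotic of the Neumann heat kernel finishes the job. For that last step the paper invokes Brown~\cite[Proposition~2.1]{Brown93}, whose good-set construction gives $|(4\pi t)^{d/2}k_\Omega^{(0)}(t,y,y)-2|\lesssim\epsilon$ uniformly on a subset $G\subset\partial\Omega$ with $\Haus^{d-1}(\partial\Omega\setminus G)\leq\epsilon\,\Haus^{d-1}(\partial\Omega)$ --- this is precisely your ``principal obstacle'', already resolved in the literature, so you need not reprove it via the Rademacher/localization sketch.
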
	

For sets $\Omega$ whose boundary is sufficiently smooth the above theorems can be deduced from three-term asymptotic expansions for $\Tr(e^{t\Delta_\Omega^{(\sigma)}})$; see, e.g., \cite{BransonGilkey_90}. As we have argued above, under our weak assumptions such three-term asymptotics for the individual traces are not necessarily true.

\begin{proof}
	Since $p>2(d-1)$, we have $t^{-\frac{d-3}{2}- \frac{d-1}{p}} = o(t^{- \frac{d-2}{2}})$ as $t\to 0^\limplus$. Thus, to complete the proof it remains to compute the asymptotic behavior of the triple integral in the right-hand side of~\eqref{eq: trace estimate}.

	By Fubini's theorem and the semigroup property of the heat kernel we have
	\begin{align*}
		\int_\Omega\int_0^t &\int_{\partial\Omega}k_\Omega^{(0)}(t-s, x, y)k_\Omega^{(0)}(s, y, x)\sigma(y)\,d\Haus^{d-1}(y)dsdx\\
		&= 
		\int_0^t \int_{\partial\Omega}\biggl(\int_\Omega k_\Omega^{(0)}(t-s, x, y)k_\Omega^{(0)}(s, y, x)\,dx\biggr)\sigma(y)\,d\Haus^{d-1}(y)ds\\
		&=
		\int_0^t \int_{\partial\Omega}k_\Omega^{(0)}(t, y, y)\sigma(y)\,d\Haus^{d-1}(y)ds\\
		&=
		t \int_{\partial\Omega}k_\Omega^{(0)}(t, y, y)\sigma(y)\,d\Haus^{d-1}(y)\,.
	\end{align*}
    Note that here we used the semigroup property pointwise on $\partial\Omega$, which follows from the continuity of the Neumann heat kernel up to the boundary; see e.g., \cite[Corollary 3.2]{Brown93} or \cite[Corollary 9.4]{FrankLarson_24}. (The latter proof is for the convex setting but it only relies on the validity of a pointwise bound on the heat kernel.)
    
	For any $\epsilon>0$ sufficiently small, there exists a set $G \subset \partial\Omega$ such that
	\begin{equation}\label{eq: estimate on good set}
		\Haus^{d-1}(\partial\Omega \setminus G) \leq \epsilon \Haus^{d-1}(\partial\Omega) \quad \mbox{and}\quad |(4\pi t)^{\frac{d}2}k_\Omega^{(0)}(t, y, y)-2|\lesssim_\Omega \epsilon \quad \mbox{for all }y \in G
	\end{equation}
	and all $t \lesssim_\Omega \epsilon$. Indeed, let $G$ be the $(\epsilon, r)$-good set constructed in \cite[Section 4]{Brown93} with $r>0$ chosen so small that the first estimate in~\eqref{eq: estimate on good set} holds. Let $\mathcal{G}\subset \Omega$ be the associated sawtooth region \cite[eq. (1.2)]{Brown93}. Note that by definition $G$ is the subset of $\partial\Omega$ reachable as limits of points in $\mathcal{G}$. The second estimate in \eqref{eq: estimate on good set}  follows by applying \cite[Proposition~2.1 (2.2+)]{Brown93} and taking the limit $\mathcal{G}\ni x \to y\in G$.
    
    Consequently, using also that $|k^{(0)}_\Omega(t, x, x)|\lesssim_{\Omega} t^{-\frac{d}2}$ for all $x \in \Omega$,
	\begin{align*}
		\int_{\partial\Omega}k_\Omega^{(0)}(t, y, y)\sigma(y)\,d\Haus^{d-1}(y)
		&= 2(4\pi t)^{-\frac{d}2}\int_{\partial\Omega} \sigma(y)\,d\Haus^{d-1}(y)\\
		&\quad  + O\Bigl(t^{-\frac{d}2}\Bigl(\epsilon + \int_{\partial\Omega \setminus G}|\sigma(y)|d\Haus^{d-1}(y)\Bigr)\Bigr)\\
		&= 2(4\pi t)^{-\frac{d}2}\int_{\partial\Omega} \sigma(y)\,d\Haus^{d-1}(y)  + o_{\epsilon \to 0^\limplus}(t^{-\frac{d}2})\,.
	\end{align*}
	
	Putting the above estimates together we conclude that
	\begin{align*}
	\Tr(e^{t\Delta_\Omega^{(0)}})&- \Tr(e^{t\Delta_\Omega^{(\sigma)}})\\
	&=
	 2t(4\pi t)^{-\frac{d}2}\int_{\partial\Omega} \sigma(y)\,d\Haus^{d-1}(y) + o_{\epsilon \to 0^\limplus}(t^{-\frac{d-2}2})+ o_{t\to 0^\limplus}(t^{-\frac{d-2}{2}})\,.
	\end{align*}
	The proof is concluded by sending $t\to 0^\limplus$ and then $\epsilon \to 0^\limplus$.
\end{proof}


\subsection{Density asymptotics}

In this subsection we consider the spectral function
$$
\rho^{(\sigma)}_\Omega(\lambda,x) := (-\Delta_\Omega^{(\sigma)}-\lambda)_\limminus^0(x,x) = \1(-\Delta_\Omega^{(\sigma)}<\lambda)(x,x)
\qquad\text{for}\ x\in\overline\Omega\,.
$$

\begin{theorem}\label{density}
    Let $\Omega \subset \R^d$ be open, bounded, with Lipschitz regular boundary. Assume that $\sigma\in L^p(\partial\Omega)$ for some $p>d-1$ and that Assumption \ref{gaussianboundass} holds. Then
    $$
    \lambda^{-d/2} \rho^{(\sigma)}_\Omega(\lambda,y) \to 2 L_{0,d}^{\rm sc}
    \qquad\text{for}\ \mathcal H^{d-1}-\text{a.e.}\ y\in\partial\Omega \,.
    $$
    Moreover, for any $\lambda_0>0$ one has
    $$
    \sup_{\lambda\geq\lambda_0,\ x\in\overline{\Omega}} \lambda^{-d/2} \rho^{(\sigma)}_\Omega(\lambda,x) <\infty \,.
    $$
\end{theorem}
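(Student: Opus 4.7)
\emph{Plan.} The strategy is to reduce the pointwise spectral asymptotics to pointwise diagonal asymptotics for the heat kernel at boundary points. For any fixed $x\in\overline\Omega$ the map $\lambda\mapsto\rho^{(\sigma)}_\Omega(\lambda,x)$ is a nondecreasing step function, and by the spectral theorem its Laplace--Stieltjes transform is the diagonal heat kernel:
\begin{equation*}
\int_0^\infty e^{-t\lambda}\,d_\lambda \rho^{(\sigma)}_\Omega(\lambda,x) \;=\; k_\Omega^{(\sigma)}(t,x,x)\,.
\end{equation*}
The plan is therefore to prove that $(4\pi t)^{d/2}k_\Omega^{(\sigma)}(t,y,y)\to 2$ as $t\to 0^\limplus$ for $\Haus^{d-1}$-a.e.\ $y\in\partial\Omega$ and then apply Karamata's Tauberian theorem to obtain the first claim; the uniform bound in the second claim will follow from the Gaussian heat kernel bound combined with a trivial comparison.

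\emph{Step 1: pointwise heat kernel asymptotics on $\partial\Omega$.} Applying Proposition~\ref{prop: improved kernel approx} with $j=1$ at $x=x'=y\in\partial\Omega$, where $d_\Omega(y)=0$, yields
\begin{equation*}
|k_\Omega^{(\sigma)}(t,y,y)-k_\Omega^{(0)}(t,y,y)| \;\lesssim\; t^{-\frac{d-1}{2}-\frac{d-1}{2p}}e^{\Lambda t}\,.
\end{equation*}
Because $p>d-1$, this error is $o(t^{-d/2})$ as $t\to 0^\limplus$, so the required asymptotics reduces to the analogous one for the Neumann heat kernel. For the latter I would invoke the good-set construction of Brown already used in the proof of Theorem~\ref{thm: asymptotics heat difference}: for each $\epsilon_n=2^{-n}$ there is a set $G_n\subset\partial\Omega$ with $\Haus^{d-1}(\partial\Omega\setminus G_n)\lesssim \epsilon_n$ on which $|(4\pi t)^{d/2}k_\Omega^{(0)}(t,y,y)-2|\lesssim \epsilon_n$ for all $t$ sufficiently small (depending on $n$). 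Borel--Cantelli applied to $\{\partial\Omega\setminus G_n\}$ then supplies a set of full $\Haus^{d-1}$-measure at whose points $(4\pi t)^{d/2}k_\Omega^{(0)}(t,y,y)\to 2$ as $t\to 0^\limplus$.

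\emph{Step 2: Tauberian conclusion and the uniform bound.} Karamata's Tauberian theorem, applied to the nondecreasing function $\rho^{(\sigma)}_\Omega(\cdot,y)$ whose Laplace--Stieltjes transform is asymptotic to $2(4\pi)^{-d/2}t^{-d/2}$ as $t\to 0^\limplus$, yields
\begin{equation*}
\lambda^{-d/2}\rho^{(\sigma)}_\Omega(\lambda,y) \;\longrightarrow\; \frac{2(4\pi)^{-d/2}}{\Gamma(d/2+1)} \;=\; 2L_{0,d}^{\rm sc}\,,
\end{equation*}
which gives the first statement. For the uniform bound, monotonicity in $\lambda$ gives the elementary estimate
\begin{equation*}
\rho^{(\sigma)}_\Omega(\lambda,x)\,e^{-t\lambda} \;\leq\; \int_0^\lambda e^{-t\mu}\,d_\mu\rho^{(\sigma)}_\Omega(\mu,x) \;\leq\; k_\Omega^{(\sigma)}(t,x,x)
\end{equation*}
for every $t>0$. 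Inserting the bound of Assumption~\ref{gaussianboundass} and choosing $t=\min\{1/\lambda,t_0\}$ then produces a bound of the form $\lambda^{-d/2}\rho^{(\sigma)}_\Omega(\lambda,x)\leq C$ valid uniformly for $\lambda\geq\lambda_0$ and $x\in\overline\Omega$. The principal obstacle is Step~1, namely passing from Brown's ``control on a good set of nearly full measure'' to a genuine pointwise a.e.\ statement; this is handled by the Borel--Cantelli device indicated above, after which the remaining steps are standard.
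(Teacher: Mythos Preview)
Your proposal is correct and follows essentially the same approach as the paper: reduce to the Neumann heat kernel via the $j=1$ case of Proposition~\ref{prop: improved kernel approx}, use Brown's good-set estimate to obtain pointwise a.e.\ boundary asymptotics for $k_\Omega^{(0)}$, apply Karamata's Tauberian theorem, and for the uniform bound insert the Gaussian estimate into $\rho_\Omega^{(\sigma)}(\lambda,x)\leq e^{t\lambda}k_\Omega^{(\sigma)}(t,x,x)$ with $t\sim\lambda^{-1}$. Your Borel--Cantelli passage from ``good sets of nearly full measure'' to genuine a.e.\ convergence is a useful explicit justification that the paper leaves implicit; the only cosmetic point is that the Laplace--Stieltjes integral should run from the bottom of the spectrum rather than from $0$, since $-\Delta_\Omega^{(\sigma)}$ may have negative eigenvalues.
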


Of course, combining the first and second items in the theorem with the dominated convergence theorem, we infer that $\lambda^{-d/2} \rho^{(\sigma)}_\Omega(\lambda,\cdot) \to 2 L_{0,d}^{\rm sc}$ in $L^q(\partial\Omega)$ for any $q<\infty$.

Theorem \ref{density} quantifies the phenomenon of boundary concentration that occurs in the case of Robin boundary conditions. In fact, the asymptotics on the boundary should be compared with the bulk asymptotics 
$$
\lambda^{-d/2} \rho^{(\sigma)}_\Omega(\lambda,x) \to L_{0,d}^{\rm sc}
\qquad\text{for all}\ x\in\Omega \,.
$$
Thus the limit on the boundary is twice the limit in the bulk.

\begin{proof}
    The facts about the Neumann heat kernel $k_\Omega^{(0)}$ recalled in the previous proof imply that
    $$
    t^\frac d2 k_\Omega^{(0)}(t,y,y) \to 2 (4\pi)^{-\frac d2}
    \qquad\text{for}\ \mathcal H^{d-1}-\text{a.e.}\ y\in\partial\Omega \,. 
    $$
    Using the bound from Proposition \ref{prop: improved kernel approx} with $j=1$ we deduce the same pointwise a.e.-convergence for the Robin heat kernel $k_\Omega^{(\sigma)}$. Since
    $$
    k^{(\sigma)}_\Omega(t,y,y) = \int_0^\infty e^{-t\lambda} \,d\rho_\Omega^{(\sigma)}(\lambda,y) \,,
    $$
    the claimed pointwise a.e.-convergence of $\rho_\Omega^{(\sigma)}$ follows from the standard Tauberian theorem for the Laplace transform (see, e.g., \cite[Theorem 10.3]{Simon_FunctionalIntegrationBook} or \cite[Theorem VII.3.2]{Korevaar_TauberianTheory_book}).

    To obtain the claimed uniform bound, we observe that, by \eqref{eq: Assumed Gaussian bound},
    $$
    \rho_\Omega^{(\sigma)}(\lambda,x) \leq e^{t\lambda} k_\Omega^{(\sigma)}(t,x,x) \leq M_\sigma \max\{1,(t/t_0)^{\frac d2}\} t^{-\frac d2} e^{(\Lambda+\lambda) t}
    \qquad\text{for all}\ x\in\overline{\Omega} \,.
    $$
    Choosing $t=\lambda^{-1}$, we obtain the claimed bound.
\end{proof}


\subsection{Differences of Riesz means}

We now apply a technique from \cite{FrankLarson_24} to deduce Riesz means asymptotics from heat trace asymptotics.

\begin{theorem}\label{thm: Difference of Riesz means}
	Let $\Omega \subset \R^d$ be open, bounded, with Lipschitz regular boundary. Assume that $\sigma \in L^p(\partial\Omega)$ for some $p>2(d-1)$ and that Assumption \ref{gaussianboundass} is satisfied. Then for any $\gamma \geq 1$, as $\lambda \to \infty$,
	\begin{equation*}
		\Tr(-\Delta_\Omega^{(0)}-\lambda)_\limminus^\gamma- \Tr(-\Delta_\Omega^{(\sigma)}-\lambda)_\limminus^\gamma = \frac{L_{\gamma,d-2}^{\rm sc}}{2\pi} \int_{\partial\Omega}\sigma(x)\,d\Haus^{d-1}(x)\, \lambda^{\gamma+ \frac{d-2}{2}}+ o(\lambda^{\gamma+ \frac{d-2}{2}})\,.
	\end{equation*} 
\end{theorem}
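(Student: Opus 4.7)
The plan is to deduce the asymptotics from Theorem~\ref{thm: asymptotics heat difference} by Karamata's Tauberian theorem applied to a decomposition of
$\phi(\lambda) := \Tr(-\Delta_\Omega^{(0)}-\lambda)_\limminus^\gamma - \Tr(-\Delta_\Omega^{(\sigma)}-\lambda)_\limminus^\gamma$
into a difference of two nondecreasing functions. The key elementary identity, obtained by Fubini,
\[
\int_0^\infty e^{-t\lambda}\,\Tr(-\Delta_\Omega^{(\sigma')}-\lambda)_\limminus^\gamma\,d\lambda = \Gamma(\gamma+1)\,t^{-\gamma-1}\,\Tr(e^{t\Delta_\Omega^{(\sigma')}}) + O(1)\quad\text{as }t\to 0^\limplus\,,
\]
(with the $O(1)$ term arising from the finitely many negative eigenvalues of $-\Delta_\Omega^{(\sigma')}$) translates the heat trace asymptotics of Theorem~\ref{thm: asymptotics heat difference} into asymptotics of Laplace transforms of Riesz mean differences.

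Since $\sigma$ may be sign-changing, $\phi$ itself need not be monotone, and the hypothesis $\gamma\geq 1$ is used to write $\phi = A - B$ where
\begin{align*}
A(\lambda) &:= \Tr(-\Delta_\Omega^{(0)}-\lambda)_\limminus^\gamma - \Tr(-\Delta_\Omega^{(\sigma_\limplus)}-\lambda)_\limminus^\gamma\,,\\
B(\lambda) &:= \Tr(-\Delta_\Omega^{(\sigma)}-\lambda)_\limminus^\gamma - \Tr(-\Delta_\Omega^{(\sigma_\limplus)}-\lambda)_\limminus^\gamma\,.
\end{align*}
Operator monotonicity of Robin Laplacians (using $\sigma\leq\sigma_\limplus$ and $0\leq\sigma_\limplus$) gives $A,B\geq 0$, and the termwise observation that $\lambda\mapsto (\lambda-a)_\limplus^\gamma - (\lambda-b)_\limplus^\gamma$ is nondecreasing whenever $a\leq b$ and $\gamma\geq 1$ shows that $A$ and $B$ are nondecreasing on $[0,\infty)$.

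To obtain the individual asymptotics of $\mathcal L[A]$ and $\mathcal L[B]$ I apply Theorem~\ref{thm: asymptotics heat difference} both to $\sigma$ (by assumption) and to $\sigma_\limplus$; the latter is permissible because $\sigma_\limplus\in L^p(\partial\Omega)$ with $p>2(d-1)$ and $(\sigma_\limplus)_\limminus=0\in L^\infty(\partial\Omega)$, so Proposition~\ref{gaussianbound} ensures that Assumption~\ref{gaussianboundass} holds for $-\Delta_\Omega^{(\sigma_\limplus)}$. Subtracting the two heat trace asymptotics yields
\[
\Tr(e^{t\Delta_\Omega^{(\sigma)}})-\Tr(e^{t\Delta_\Omega^{(\sigma_\limplus)}}) \sim \frac{1}{2\pi(4\pi t)^{(d-2)/2}}\int_{\partial\Omega}\sigma_\limminus\,d\Haus^{d-1}\,,
\]
and hence, via the Laplace identity, $\mathcal L[B](t)\sim \frac{\Gamma(\gamma+1)}{2\pi(4\pi)^{(d-2)/2}}\int_{\partial\Omega}\sigma_\limminus\,d\Haus^{d-1}\cdot t^{-\gamma-d/2}$; the analogous computation for $\mathcal L[A]$ replaces $\sigma_\limminus$ with $\sigma_\limplus$.

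After an integration by parts (the boundary contributions at $0$ and $\infty$ are subleading thanks to $\gamma+d/2>1$ and the Weyl bound $A(\lambda)+B(\lambda)\lesssim\lambda^{\gamma+d/2}$), Karamata's Tauberian theorem applied to the nonnegative nondecreasing $A$ and $B$ gives
\[
A(\lambda) \sim \frac{L_{\gamma,d-2}^{\rm sc}}{2\pi}\int_{\partial\Omega}\sigma_\limplus\,d\Haus^{d-1}\cdot \lambda^{\gamma+(d-2)/2}\,,
\]
and the same with $\sigma_\limminus$ for $B$. Subtracting and using $\sigma_\limplus-\sigma_\limminus=\sigma$ gives the theorem. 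The main obstacle is precisely the construction of this two-sided monotone decomposition, which crucially relies on $\gamma\geq 1$; the supplementary need to apply Theorem~\ref{thm: asymptotics heat difference} also to $\sigma_\limplus$ is handled for free by Proposition~\ref{gaussianbound}, so no additional hypothesis on $\sigma_\limminus$ is required.
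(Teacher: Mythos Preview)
Your proposal is correct and follows essentially the same route as the paper: decompose via the intermediate boundary weight $\sigma_\limplus$, establish monotonicity of each piece, apply Theorem~\ref{thm: asymptotics heat difference} to both $\sigma$ and $\sigma_\limplus$ (the latter justified exactly as you say, by Proposition~\ref{gaussianbound}), and conclude by a Tauberian argument. The only minor difference is that the paper first treats $\gamma=1$ (proving monotonicity via $\Tr(H-\lambda)_\limminus = \int_{-\infty}^\lambda \Tr(H-\mu)_\limminus^0\,d\mu$ and the variational principle) and then extends to $\gamma>1$ by the Aizenman--Lieb integration trick, whereas you handle all $\gamma\geq 1$ at once using the termwise monotonicity of $\lambda\mapsto(\lambda-a)_\limplus^\gamma-(\lambda-b)_\limplus^\gamma$; both arguments are valid and yours is slightly more direct.
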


Before proving Theorem~\ref{thm: Difference of Riesz means} let us argue that it implies Theorem~\ref{thm: Difference of Riesz means intro}.

\begin{proof}[Proof of Theorem \ref{thm: Difference of Riesz means intro}]
    By Proposition \ref{gaussianbound}, Assumption \ref{gaussianboundass} is satisfied for $\sigma$ as in Theorem \ref{thm: Difference of Riesz means intro}. Therefore, Theorem~\ref{thm: Difference of Riesz means intro} is a direct consequence of Theorem~\ref{thm: Difference of Riesz means}.
\end{proof}

\begin{proof}[Proof of Theorem~\ref{thm: Difference of Riesz means}]
    By Proposition \ref{gaussianbound} (or by \cite{Gesztesy_etal_PAMS15}), the kernel $k_\Omega^{(\sigma_\limplus)}$ satisfies a Gaussian bound as in \eqref{eq: Assumed Gaussian bound}. By Theorem~\ref{thm: asymptotics heat difference},
    \begin{align*}
        \Tr(e^{t\Delta_\Omega^{(0)}})- \Tr(e^{t\Delta_\Omega^{(\sigma)}}) &= \frac{1}{2\pi (4\pi t)^{\frac{d-2}2}}\int_{\partial\Omega}\sigma(x)\,d\Haus^{d-1}(x)(1+o(1))\quad \mbox{as }t\to 0^\limplus\,,\\
        \Tr(e^{t\Delta_\Omega^{(0)}})- \Tr(e^{t\Delta_\Omega^{(\sigma_\limplus)}}) &= \frac{1}{2\pi (4\pi t)^{\frac{d-2}2}}\int_{\partial\Omega}\sigma_\limplus(x)\,d\Haus^{d-1}(x)(1+o(1))\quad \mbox{as }t\to 0^\limplus\,,
    \end{align*}
    and therefore, by linearity,
    \begin{align*}
        \Tr(e^{t\Delta_\Omega^{(\sigma_\limplus)}})- \Tr(e^{t\Delta_\Omega^{(\sigma)}}) &= \frac{1}{2\pi (4\pi t)^{\frac{d-2}2}}\int_{\partial\Omega} \!(\sigma(x) \!-\! \sigma_\limplus(x))\,d\Haus^{d-1}(x)(1+o(1))\ \mbox{as }t\to 0^\limplus.
    \end{align*}

	Consider the functions
	\begin{align*}
	f_1(\lambda) := \Tr(-\Delta_\Omega^{(\sigma)}-\lambda)_\limminus - \Tr(-\Delta_\Omega^{(\sigma_\limplus)}-\lambda)_\limminus\,, \\
    f_2(\lambda) := \Tr(-\Delta_\Omega^{(0)}-\lambda)_\limminus - \Tr(-\Delta_\Omega^{(\sigma_\limplus)}-\lambda)_\limminus \,.
	\end{align*}
	We begin by proving that $f_j$ is nondecreasing for $j=1, 2$. For definiteness, we write out the argument for $j=1$, the argument being identical for $j=2$. For any lower semibounded operator $H$ we have $\Tr(H-\lambda)_\limminus = \int_{-\infty}^\lambda \Tr(H-\mu)_\limminus^0\,d\mu$, so in particular
    $$
	f_1(\lambda) = \int_{-\infty}^\lambda \left( \Tr(-\Delta_\Omega^{(\sigma)}-\mu)_\limminus^0 - \Tr(-\Delta_\Omega^{(\sigma_\limplus)}-\mu)_\limminus^0 \right)d\mu \,.
	$$
	When $\lambda_1\leq \lambda_2$ it follows that
	$$
	f_1(\lambda_2) - f_1(\lambda_1) = \int_{\lambda_1}^{\lambda_2} \left( \Tr(-\Delta_\Omega^{(\sigma)}-\mu)_\limminus^0 - \Tr(-\Delta_\Omega^{(\sigma_\limplus)}-\mu)_\limminus^0 \right)d\mu \,.
	$$
	Since $\sigma \leq \sigma_\limplus$ the variational principle implies that $\Tr(-\Delta_\Omega^{(\sigma)}-\mu)_\limminus^0 - \Tr(-\Delta_\Omega^{(\sigma_\limplus)}-\mu)_\limminus^0\geq 0$ for any $\mu$, so we deduce the claimed monotonicity of $f_1$.
	
	We note that
	\begin{align*}
	\int_0^\infty e^{-t\lambda} df_1(\lambda) = t^{-1} \bigl( \Tr(e^{t\Delta_\Omega^{(\sigma)}}) - \Tr(e^{t\Delta_\Omega^{(\sigma_\limplus)}}) \bigr)\,\\
    \int_0^\infty e^{-t\lambda} df_2(\lambda) = t^{-1} \bigl( \Tr(e^{t\Delta_\Omega^{(0)}}) - \Tr(e^{t\Delta_\Omega^{(\sigma_\limplus)}}) \bigr)\,.
	\end{align*}
	As explained above, Theorem \ref{thm: asymptotics heat difference} implies that
	\begin{align*}
	\int_0^\infty e^{-t\lambda} df_1(\lambda) &= \frac{1}{2\pi t (4\pi t)^{\frac{d-2}2}} \int_{\partial\Omega}(\sigma_\limplus(x)-\sigma(x))\,d\Haus^{d-1}(x) (1 + o(1))\,,\\
    \int_0^\infty e^{-t\lambda} df_2(\lambda) &= \frac{1}{2\pi t (4\pi t)^{\frac{d-2}2}} \int_{\partial\Omega}\sigma_\limplus(x)\,d\Haus^{d-1}(x) (1 + o(1))\,,
	\end{align*}
    as $t\to 0^\limplus$.
	It follows from the standard Tauberian theorem for the Laplace transform (see, e.g., \cite[Theorem 10.3]{Simon_FunctionalIntegrationBook} or \cite[Theorem VII.3.2]{Korevaar_TauberianTheory_book}) that
	\begin{align*}
	f_1(\lambda) &= \frac{1}{2\pi \Gamma(1+\frac{d}2) (4\pi)^{\frac{d-2}2}} \int_{\partial\Omega}(\sigma_\limplus(x)-\sigma(x))\,d\Haus^{d-1}(x) \lambda^{\frac{d}{2}} (1 + o(1))\,,\\
    f_2(\lambda) &= \frac{1}{2\pi \Gamma(1+\frac{d}2) (4\pi)^{\frac{d-2}2}} \int_{\partial\Omega}\sigma_\limplus(x)\,d\Haus^{d-1}(x) \lambda^{\frac{d}{2}} (1 + o(1))\,,
	\end{align*}
	as $\lambda \to \infty$. Since $(4\pi)^{-\frac{d-2}{2}}\Gamma(\frac{d}{2}+1)^{-1} = L_{1,d-2}^{\rm sc}$, the proof for $\gamma=1$ is completed by noticing that
    $$
        \Tr(-\Delta_\Omega^{(0)}-\lambda)_\limminus - \Tr(-\Delta_\Omega^{(\sigma)}-\lambda)_\limminus = f_2(\lambda) - f_1(\lambda)\,.
    $$
    
    The asymptotics for $\gamma>1$ are deduced from those for $\gamma=1$ by integration with respect to $\lambda$ (see \cite{AizenmanLieb} or \cite[Subsection 5.1.1]{FrankLaptevWeidl}).
\end{proof}


\subsection{Neumann--Robin gap asymptotics}

As a final topic we now turn to the asymptotics of the Neumann--Robin gaps $\lambda_n(-\Delta_\Omega^{(\sigma)})-\lambda_n(-\Delta_\Omega^{(0)})$.

\begin{corollary}\label{cor: gap asymptotics}
    Let $\Omega \subset \R^d$ be open, bounded, with Lipschitz regular boundary. Assume that $\sigma \in L^p(\partial\Omega)$ for some $p>2(d-1)$ and that Assumption \ref{gaussianboundass} is satisfied. Then, as $N \to \infty,$
	\begin{equation*}
		\sum_{n=1}^N \frac{\lambda_n(-\Delta_\Omega^{(\sigma)})-\lambda_n(-\Delta_\Omega^{(0)})}{N} = \frac{2}{|\Omega|}\int_{\partial\Omega}\sigma(x)\,d\Haus^{d-1}(x)+ o(1)\,.
	\end{equation*} 
\end{corollary}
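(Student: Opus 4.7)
My plan is to deduce Corollary~\ref{cor: gap asymptotics} from Theorem~\ref{thm: Difference of Riesz means} at $\gamma = 1$ via Legendre duality. Write $F_\tau(\lambda) := \Tr(-\Delta_\Omega^{(\tau)}-\lambda)_\limminus = \sum_k (\lambda - \lambda_k^{(\tau)})_\limplus$, where I abbreviate $\lambda_k^{(\tau)} := \lambda_k(-\Delta_\Omega^{(\tau)})$. The Legendre transform identity
\begin{equation*}
	\sum_{k=1}^N \lambda_k^{(\tau)} = \sup_{\lambda \in \R}\bigl(N\lambda - F_\tau(\lambda)\bigr)
\end{equation*}
holds, with equality attained at $\lambda = \lambda_N^{(\tau)}$ (a direct inspection of the piecewise-linear structure of $F_\tau$). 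Applying the identity as an equality at the optimal point for one operator and as the corresponding pointwise inequality for the other, and subtracting, I obtain the two-sided sandwich
\begin{equation*}
	F_0(\lambda_N^{(0)}) - F_\sigma(\lambda_N^{(0)}) \leq \sum_{k=1}^N \bigl(\lambda_k^{(\sigma)} - \lambda_k^{(0)}\bigr) \leq F_0(\lambda_N^{(\sigma)}) - F_\sigma(\lambda_N^{(\sigma)})\,,
\end{equation*}
valid irrespective of the sign of $\sigma$.

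Next I would establish the one-term Weyl asymptotic $\lambda_N^{(\sigma)} \sim (N/(L_{0,d}^{\rm sc}|\Omega|))^{2/d}$ as $N \to \infty$, for both operators. The Neumann case is classical. For the Robin operator the asymptotic is a consequence of the $\gamma = 1$ case of Theorem~\ref{main}: Karamata's one-sided Tauberian theorem applied to the nondecreasing counting function $N_\sigma$, whose primitive is $F_\sigma$, together with the easily verified identity $(1+d/2)L_{1,d}^{\rm sc} = L_{0,d}^{\rm sc}$, yields $N_\sigma(\lambda) \sim L_{0,d}^{\rm sc}|\Omega|\lambda^{d/2}$, from which the claim follows by inversion. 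In particular, $\lambda_N^{(\sigma)}/\lambda_N^{(0)} \to 1$.

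Finally, Theorem~\ref{thm: Difference of Riesz means} at $\gamma=1$ gives
\begin{equation*}
	F_0(\Lambda) - F_\sigma(\Lambda) = \frac{L_{1,d-2}^{\rm sc}}{2\pi}\int_{\partial\Omega}\sigma\,d\Haus^{d-1}\cdot \Lambda^{d/2}\bigl(1+o(1)\bigr) \quad\text{as } \Lambda \to \infty\,,
\end{equation*}
so the two ends of the sandwich agree to leading order upon substituting the asymptotically comparable quantities $\Lambda = \lambda_N^{(0)}$ and $\Lambda = \lambda_N^{(\sigma)}$. Combining this with $(\lambda_N^{(0)})^{d/2} \sim N/(L_{0,d}^{\rm sc}|\Omega|)$ and the direct computation $L_{1,d-2}^{\rm sc}/L_{0,d}^{\rm sc} = (4\pi)^{-(d-2)/2+d/2} = 4\pi$ from the definition of the semiclassical constants, the leading term reduces to $\frac{2N}{|\Omega|}\int_{\partial\Omega}\sigma\,d\Haus^{d-1}$, and division by $N$ completes the proof.

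The only non-routine ingredient beyond Theorem~\ref{thm: Difference of Riesz means} is the Robin Weyl asymptotic in the second paragraph, which is precisely what is required to close the gap between the two ends of the sandwich; it is itself a straightforward consequence of Theorem~\ref{main} and a Tauberian argument. Given these inputs, the Legendre-duality sandwich yields the corollary essentially by rearrangement and arithmetic.
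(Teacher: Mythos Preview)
Your proof is correct and coincides with the paper's in substance. The paper isolates the argument into an abstract lemma (Lemma~\ref{lem: gap and Riesz diff equiv}) about nondecreasing sequences, whose proof of the relevant implication derives exactly your sandwich
\[
F_0(\lambda_N^{(0)}) - F_\sigma(\lambda_N^{(0)}) \;\leq\; \sum_{k=1}^N \bigl(\lambda_k^{(\sigma)} - \lambda_k^{(0)}\bigr) \;\leq\; F_0(\lambda_N^{(\sigma)}) - F_\sigma(\lambda_N^{(\sigma)})
\]
by direct manipulation with negative parts rather than via the Legendre-transform language; the remaining ingredients (the Weyl asymptotic for $\lambda_N^{(\tau)}$, Theorem~\ref{thm: Difference of Riesz means} at $\gamma=1$, and the constant identity $L_{1,d-2}^{\rm sc}/(2\pi L_{0,d}^{\rm sc})=2$) are the same. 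The only minor difference is that the paper extracts the Robin Weyl law inside Lemma~\ref{lem: gap and Riesz diff equiv} from the Riesz-means difference and the Neumann Weyl law, whereas you obtain it independently from Theorem~\ref{main}; both routes are valid.
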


\begin{proof}[Proof of Corollary \ref{cor: gap asymptotics intro}]
    By Proposition \ref{gaussianbound}, Assumption \ref{gaussianboundass} is satisfied for $\sigma$ as in Corollary \ref{cor: gap asymptotics intro}. Therefore, Corollary~\ref{cor: gap asymptotics intro} is a direct consequence of Corollary~\ref{cor: gap asymptotics}.
\end{proof}

A key ingredient in our proof of this corollary is the following lemma.
\begin{lemma}\label{lem: gap and Riesz diff equiv}
    Let $\{a_n\}_{n\geq 1}$ and $\{b_n\}_{n\geq 1}$ be nondecreasing sequences. Assume that there are constants $\alpha>0$, $C>0$ such that, as $n\to\infty$,
    \begin{equation}\label{eq: assumed sequence asymptotics}
    a_n = C n^\alpha + o(n^\alpha)\,.
    \end{equation}
    Then
    \begin{equation}\label{eq: assumed Riesz diff asymptotics}
    \sum_{n\geq 1} (a_n - \lambda)_\limminus - \sum_{n\geq 1} (b_n - \lambda)_\limminus = D \lambda^{\frac{1}{\alpha}} + o(\lambda^{\frac{1}{\alpha}}) \quad \mbox{as } \lambda \to \infty\,,
    \end{equation}
    for some $D \in \R$, if and only if
    \begin{equation}\label{eq: assumed gap asymptotics}
    \sum_{n=1}^N \frac{b_n - a_n}{N} = C^{\frac{1}{\alpha}}D + o(1) \quad \mbox{as } N \to \infty\,.
    \end{equation}
\end{lemma}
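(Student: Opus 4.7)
The plan is to prove the pointwise sandwich
\[
S_{N_b(\lambda)} \le G(\lambda) \le S_{N_a(\lambda)} \qquad \text{for all } \lambda > 0,
\]
where $G(\lambda) := \sum_{n \ge 1}[(a_n - \lambda)_\limminus - (b_n - \lambda)_\limminus]$ and $N_a(\lambda) := \#\{n : a_n < \lambda\}$, $N_b(\lambda) := \#\{n : b_n < \lambda\}$, and then to invoke a Karamata-type Tauberian argument. To establish the sandwich, I would decompose the sum defining $G(\lambda)$ term-by-term according to whether $a_n$ and $b_n$ lie below $\lambda$: the contribution of an index $n$ is $b_n - a_n$ if both are $< \lambda$, is $0$ if both are $\ge \lambda$, and is of determined sign on the ``edge''. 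Assuming for concreteness $N_a(\lambda) \ge N_b(\lambda)$, one obtains
\[
G(\lambda) = S_{N_b(\lambda)} + \sum_{N_b(\lambda) < n \le N_a(\lambda)}(\lambda - a_n),
\]
with every tail term nonnegative since $a_n < \lambda$, yielding the lower bound; writing $\lambda - a_n = (b_n - a_n) + (\lambda - b_n)$ in the tail rewrites this as
\[
G(\lambda) = S_{N_a(\lambda)} + \sum_{N_b(\lambda) < n \le N_a(\lambda)}(\lambda - b_n),
\]
with every tail term nonpositive since $b_n \ge \lambda$ for $n > N_b(\lambda)$, yielding the upper bound. The case $N_a \le N_b$ is symmetric.

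The next step is to show that, under either hypothesis together with \eqref{eq: assumed sequence asymptotics}, one has $b_n = Cn^\alpha + o(n^\alpha)$, equivalently $N_b(\lambda) = (\lambda/C)^{1/\alpha}(1 + o(1))$. If \eqref{eq: assumed gap asymptotics} holds, then $\sum_{n=1}^N b_n = \sum_{n=1}^N a_n + S_N = \frac{C}{\alpha+1}N^{\alpha+1}(1 + o(1))$, the $O(N)$ contribution being absorbed, and Karamata's theorem for monotone sequences (bracketing $b_N$ between suitable differences $(B_{(1\pm\eta)N} - B_N)/(\pm\eta N)$ and sending $\eta \to 0$) yields the claim. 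If instead \eqref{eq: assumed Riesz diff asymptotics} holds, I shift so that $a_n, b_n \ge 0$, use the identity $G(\lambda) = \int_0^\lambda (N_a(\mu) - N_b(\mu))\,d\mu$ together with $\int_0^\lambda N_a(\mu)\,d\mu = \frac{\alpha C^{-1/\alpha}}{\alpha+1}\lambda^{(\alpha+1)/\alpha}(1 + o(1))$ to deduce the same leading asymptotic for $\int_0^\lambda N_b(\mu)\,d\mu$, and apply Karamata's theorem for monotone functions to $N_b$.

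With $N_a(\lambda)$ and $N_b(\lambda)$ both asymptotic to $(\lambda/C)^{1/\alpha}$, the sandwich completes the argument. For \eqref{eq: assumed gap asymptotics}$\Rightarrow$\eqref{eq: assumed Riesz diff asymptotics}, both $S_{N_a(\lambda)}$ and $S_{N_b(\lambda)}$ equal $DC^{1/\alpha}\cdot(\lambda/C)^{1/\alpha}(1 + o(1)) = D\lambda^{1/\alpha} + o(\lambda^{1/\alpha})$, which squeezes $G(\lambda)$. For the converse, given an integer $N$ with $a_N < a_{N+1}$, any $\lambda \in (a_N, a_{N+1})$ satisfies $N_a(\lambda) = N$ and $\lambda = CN^\alpha(1 + o(1))$; the upper sandwich gives $S_N \ge G(\lambda) = DC^{1/\alpha}N + o(N)$, while picking $\lambda \in (b_N, b_{N+1})$ produces the matching upper bound $S_N \le DC^{1/\alpha}N + o(N)$. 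The remaining indices $N$ for which both $a_N = a_{N+1}$ and $b_N = b_{N+1}$ are handled by taking one-sided limits of $G$ across the common multiplicity clusters.

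The main obstacle is discovering the sandwich itself. A direct attempt to bound the edge contribution $G(\lambda) - S_{N_a(\lambda)} = \sum_{\text{edge}}(\lambda - b_n)$ pointwise by the product of the edge length $|N_a(\lambda) - N_b(\lambda)| = o(\lambda^{1/\alpha})$ and the maximal edge value $|\lambda - b_n| = o(\lambda)$ only gives $o(\lambda^{1/\alpha + 1})$, which is far larger than the target error $o(\lambda^{1/\alpha})$ whenever $\alpha > 0$. The sandwich circumvents any such pointwise control by exhibiting two partial sums that bracket $G(\lambda)$, both of which turn out to be asymptotic to the same leading quantity once $N_a$ and $N_b$ are known to share the same leading asymptotic.
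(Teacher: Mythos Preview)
Your sandwich $S_{N_b(\lambda)} \le G(\lambda) \le S_{N_a(\lambda)}$ is correct, and for the direction \eqref{eq: assumed gap asymptotics}$\Rightarrow$\eqref{eq: assumed Riesz diff asymptotics} your argument matches the paper's. The gap is in the converse. Inverting the sandwich yields the lower bound $S_N \ge G(\lambda)$ only when $N = N_a(\lambda)$ for some $\lambda$, i.e.\ when $a_N < a_{N+1}$; the upper bound likewise requires $b_N < b_{N+1}$. To conclude $S_N/N \to C^{1/\alpha}D$ you need \emph{both} bounds at the same $N$, so the problematic indices are those with $a_N = a_{N+1}$ \emph{or} $b_N = b_{N+1}$, not (as you write) those where both equalities hold simultaneously. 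Your ``one-sided limits of $G$'' do not close this: $G$ is continuous, and the only a priori control on the variation of $S_M$ across a multiplicity cluster of length $o(N)$ is $o(N)\cdot o(N^\alpha)$, far larger than the $o(N)$ you would need.

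The paper sidesteps the issue by proving the \emph{dual} sandwich $G(a_N) \le S_N \le G(b_N)$, valid for \emph{every} $N$, via the same elementary manipulation: write $S_N = \sum_{n\le N}(b_n - b_N) - \sum_{n\le N}(a_n - b_N)$, use $b_n - b_N = -(b_n - b_N)_\limminus$ for $n \le N$ together with $-x \le x_\limminus$, and then extend both sums to all $n \ge 1$ at no cost. From this and $a_N, b_N \sim CN^\alpha$ the conclusion $S_N/N \to C^{1/\alpha}D$ is immediate. Your sandwich and the paper's are natural complements --- each is suited to one direction of the equivalence --- and adding the dual inequality to your argument would complete it.
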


Before proving Lemma~\ref{lem: gap and Riesz diff equiv} let us show how it can be combined with Theorem \ref{thm: Difference of Riesz means} to imply Corollary \ref{cor: gap asymptotics}.

\begin{proof}[Proof of Corollary \ref{cor: gap asymptotics}]
    By Weyl's law
    \begin{equation}\label{eq: Weyls law}
        \lambda_n(-\Delta_\Omega^{(0)}) = \Bigl(\frac{n}{L_{0,d}^{\rm sc}|\Omega|}\Bigr)^{\frac{2}d}+o(n^{\frac{2}d}) \quad \mbox{as } n\to \infty\,,
    \end{equation}
    and by Theorem \ref{thm: Difference of Riesz means}
    \begin{equation}\label{eq: diff 2}
    \begin{aligned}
        \sum_{n\geq1}(\lambda_n(-\Delta_\Omega^{(0)})&-\lambda)_\limminus-\sum_{n\geq1}(\lambda_n(-\Delta_\Omega^{(\sigma)})-\lambda)_\limminus\\
        &=
        \Tr(-\Delta_\Omega^{(0)}-\lambda)_\limminus- \Tr(-\Delta_\Omega^{(\sigma)}-\lambda)_\limminus\\
        &=\frac{L_{1,d-2}^{\rm sc}}{2\pi} \int_{\partial\Omega}\sigma(x)\,d\Haus^{d-1}(x) \lambda^{\frac{d}2}+o(\lambda^{\frac{d}2}) \quad \mbox{as } \lambda \to \infty\,.
	\end{aligned} 
    \end{equation}
    Therefore, by Lemma~\ref{lem: gap and Riesz diff equiv} with 
    $$\alpha = \frac{2}{d}\,,\quad  C= (L_{0,d}^{\rm sc}|\Omega|)^{-\frac{2}d}\,,\quad \mbox{and } D = \frac{L_{1,d-2}^{\rm sc}}{2\pi} \int_{\partial\Omega}\sigma(x)\,d\Haus^{d-1}(x)$$ 
    we deduce that
    \begin{equation*}
        \lim_{N\to \infty}\sum_{n=1}^N\frac{\lambda_n(-\Delta_\Omega^{(\sigma)})-\lambda_n(-\Delta_\Omega^{(0)})}{N} =\frac{L_{1,d-2}^{\rm sc}}{2\pi L_{0,d}^{\rm sc}|\Omega|} \int_{\partial\Omega}\sigma(x)\,d\Haus^{d-1}(x)
    \end{equation*}
    The fact that $\frac{L_{1,d-2}^{\rm sc}}{2\pi L_{0,d}^{\rm sc}} = 2$ completes the proof.
\end{proof}

\begin{proof}[Proof of Lemma~\ref{lem: gap and Riesz diff equiv}]
    We first note that by the assumption on $\{a_n\}_{n\geq 1}$ we have
    \begin{equation*}
        \sum_{n\geq 1}(\lambda-a_n)_\limminus = \frac{\alpha}{1+\alpha}C^{-\frac{1}\alpha}\lambda^{1+\frac{1}\alpha}+o(\lambda^{1+\frac{1}\alpha})\quad \mbox{as } \lambda \to \infty
    \end{equation*}
    and
    \begin{equation*}
        N^{-1}\sum_{n=1}^N a_n  = \frac{C}{1+\alpha}N^\alpha +o(N^\alpha) \quad \mbox{as }N \to \infty\,. 
    \end{equation*}
    Consequently, if \eqref{eq: assumed Riesz diff asymptotics} holds then
    \begin{equation*}
        \sum_{n\geq 1}(\lambda-b_n)_\limminus = \frac{\alpha}{1+\alpha}C^{-\frac{1}\alpha}\lambda^{1+\frac{1}\alpha}+o(\lambda^{1+\frac{1}\alpha})\quad \mbox{as } \lambda \to \infty\,,
    \end{equation*}
    while if instead \eqref{eq: assumed gap asymptotics} holds then it also holds that
    \begin{equation*}
        N^{-1}\sum_{n=1}^N b_n  = \frac{C}{1+\alpha}N^\alpha +o(N^\alpha) \quad \mbox{as }N \to \infty\,. 
    \end{equation*}
    (As is well-known, these two asymptotic statements are in fact equivalent; a proof can be given, for instance, along the lines of \cite[Lemma A.1]{FrankGeisinger16}.) In either case it follows by a simple Tauberian-type argument that
    \begin{equation}\label{eq: second sequence asymptotics}
        b_n = C n^\alpha + o(n^\alpha) \quad \mbox{as } n \to \infty\,,
    \end{equation} 
    see, e.g., \cite[Lemma I.17.1]{Korevaar_TauberianTheory_book}.

    We next note that for any $N$,
    \begin{align*}
        N^{-1} \sum_{n=1}^N (b_n - a_n) & = N^{-1} \Biggl( \sum_{n=1}^N (b_n - b_N) - \sum_{n=1}^N (a_n - b_N) \Biggr) \\
        & \leq N^{-1} \Biggl( - \sum_{n=1}^N (b_n - b_N)_\limminus + \sum_{n=1}^N (a_n - b_N)_\limminus \Biggr) \\
        & \leq N^{-1} \Biggl( - \sum_{n\geq 1} (b_n - b_N)_\limminus + \sum_{n\geq 1} (a_n - b_N)_\limminus \Biggr) \\
        & = \frac{b_N^{\frac{1}\alpha}}{N} \ \frac{- \sum (b_n - b_N)_\limminus + \sum (a_n - b_N)_\limminus}{b_N^{\frac{1}\alpha}}
    \end{align*}
    and
    \begin{align*}
        N^{-1} \sum_{n=1}^N (b_n - a_n) & = N^{-1} \Biggl( \sum_{n=1}^N (b_n - a_N) - \sum_{n=1}^N (a_n - a_N) \Biggr) \\
        & \geq N^{-1} \Biggl( - \sum_{n=1}^N (b_n - a_N)_\limminus + \sum_{n=1}^N (a_n - a_N)_\limminus \Biggr) \\
        & \geq N^{-1} \Biggl( - \sum_{n\geq 1} (b_n - a_N)_\limminus + \sum_{n\geq 1} (a_n - a_N)_\limminus \Biggr) \\
        & = \frac{a_N^{\frac{1}\alpha}}{N} \ \frac{- \sum (b_n - a_N)_\limminus + \sum (a_n - a_N)_\limminus}{a_N^{\frac{1}\alpha}} \,.
    \end{align*}
    Writing $\{c_n\}_{n\geq 1}$ for either $\{a_n\}_{n\geq 1}$ or $\{b_n\}_{n\geq 1}$ we have, by assumption
    $$
    \frac{- \sum (b_n - c_N)_\limminus + \sum (a_n - c_N)_\limminus}{c_N^{\frac{1}\alpha}}
    \to D
    $$
    and, by \eqref{eq: assumed sequence asymptotics} and \eqref{eq: second sequence asymptotics},
    $$
    N^{-1}c_N^{\frac{1}\alpha} \to C^{\frac{1}\alpha} \,.
    $$
    Thus, the upper and the lower bound coincide asymptotically and we obtain that
    \begin{equation*}
        N^{-1} \sum_{n=1}^N (b_n - a_n) \to C^{\frac{1}\alpha}D\,.
    \end{equation*}
    We have thus proved the first implication.

    To prove the second implication we argue similarly. For $\lambda > \max\{a_1, b_1\}$ define
    \begin{equation*}
        N_a(\lambda) = \#\{n \in \N: a_n \leq \lambda\} \quad \mbox{and} \quad N_b(\lambda) = \#\{n \in \N: b_n \leq \lambda\}\,.
    \end{equation*}
    By the asymptotics of the sequences $\{a_n\}_{n\geq 1}, \{b_n\}_{n\geq 1}$ it follows that $N_a(\lambda) \sim \bigl(\frac{\lambda}{C}\bigr)^{\frac{1}{\alpha}}$ and $N_b(\lambda) \sim \bigl(\frac{\lambda}{C}\bigr)^{\frac{1}{\alpha}}$ as $\lambda \to \infty$.

    Using $x_\limminus = \max\{0, -x\} \geq -x$ it holds that, for any $\lambda >\max\{a_1, b_1\}$,
    \begin{align*}
        \lambda^{-\frac{1}\alpha}\sum_{n\geq 1}((a_n-\lambda)_\limminus-(b_n-\lambda)_\limminus)
        &\leq
        \lambda^{-\frac{1}\alpha}\sum_{n= 1}^{N_a(\lambda)}((a_n-\lambda)_\limminus-(b_n-\lambda)_\limminus)\\
        &\leq
        \lambda^{-\frac{1}\alpha}\sum_{n= 1}^{N_a(\lambda)}(b_n-a_n)\\
        &=
        \frac{N_a(\lambda)}{\lambda^{\frac{1}\alpha}}\frac{1}{N_a(\lambda)}\sum_{n= 1}^{N_a(\lambda)}(b_n-a_n)
    \end{align*}
    and
    \begin{align*}
        \lambda^{-\frac{1}\alpha}\sum_{n\geq 1}((a_n-\lambda)_\limminus-(b_n-\lambda)_\limminus)
        &\geq 
        \lambda^{-\frac{1}\alpha}\sum_{n= 1}^{N_b(\lambda)}((a_n-\lambda)_\limminus-(b_n-\lambda)_\limminus)\\
        &\geq
        \lambda^{-\frac{1}\alpha}\sum_{n= 1}^{N_b(\lambda)}(b_n-a_n)\\
        &=
        \frac{N_b(\lambda)}{\lambda^{\frac{1}\alpha}}\frac{1}{N_b(\lambda)}\sum_{n= 1}^{N_b(\lambda)}(b_n-a_n)\,.
    \end{align*}
    The proof of the second implication can now be completed in the same manner as that of the first, relying on the asymptotics in~\eqref{eq: assumed gap asymptotics} and the asymptotics for $N_a(\lambda), N_b(\lambda)$ computed above.
\end{proof}


\bibliographystyle{amsalpha}

\end{document}